\colorlet{darkblue}{blue!50!black}
\colorlet{darkblue}{blue!50!black}
\renewcommand{\tocsection}[3]{%
	\indentlabel{\@ifnotempty{#2}{\bfseries\ignorespaces#1 #2\quad}}\bfseries#3}
\renewcommand{\tocsubsection}[3]{%
	\indentlabel{\@ifnotempty{#2}{\ignorespaces#1 #2\quad}}#3}
\let\oldtocsubsubsection=\tocsubsubsection
\renewcommand{\tocsubsubsection}[2]{\hspace{2em}\oldtocsubsubsection{#1}{#2}}
\newcommand\@dotsep{4.5}
\def\@tocline#1#2#3#4#5#6#7{\relax
	\ifnum #1>\c@tocdepth % then omit
	\else
	\par \addpenalty\@secpenalty\addvspace{#2}%
	\begingroup \hyphenpenalty\@M
	\@ifempty{#4}{%
		\@tempdima\csname r@tocindent\number#1\endcsname\relax
	}{%
		\@tempdima#4\relax
	}%
	\parindent\z@ \leftskip#3\relax \advance\leftskip\@tempdima\relax
	\rightskip\@pnumwidth plus1em \parfillskip-\@pnumwidth
	#5\leavevmode\hskip-\@tempdima{#6}\nobreak
	\leaders\hbox{$\m@th\mkern \@dotsep mu\hbox{.}\mkern \@dotsep mu$}\hfill
	\nobreak
	\hbox to\@pnumwidth{\@tocpagenum{\ifnum#1=1\bfseries\fi#7}}\par% <-- \bfseries for \section page
	\nobreak
	\endgroup
	\fi}
\renewcommand\csname r@tocindent0\endcsname{0pt}
\def\l@subsection{\@tocline{2}{0pt}{2.5pc}{5pc}{}}
\newtheorem{theorem}{Theorem}[section]
\newtheorem{lemma}[theorem]{Lemma}
\newtheorem{definition}[theorem]{Definition}
\newtheorem{example}[theorem]{Example}
\newtheorem{remark}[theorem]{Remark}
\let\originalleft\left
\let\originalright\right
\renewcommand{\left}{\mathopen{}\mathclose\bgroup\originalleft}
\renewcommand{\right}{\aftergroup\egroup\originalright}
\let\oldtocsection=\tocsection
\let\oldtocsubsection=\tocsubsection
\let\oldtocsubsubsection=\tocsubsubsection
\renewcommand{\tocsection}[2]{\hspace{0em}\oldtocsection{#1}{#2}}
\renewcommand{\tocsubsection}[2]{\hspace{1em}\oldtocsubsection{#1}{#2}}
\renewcommand{\tocsubsubsection}[2]{\hspace{2em}\oldtocsubsubsection{#1}{#2}}
\renewcommand{\d}{\/\mathrm{d}\/}
\def\L{\mathbb{L}}
\def\C{\mathrm{C}}
\def\f{\boldsymbol{f}}
\def\D{\mathrm{D}}
\def\X{\mathbb{X}}
\def\g{\boldsymbol{g}}
\def\V{\mathbb{v}}
\def\g{\boldsymbol{g}}
\def\G{\boldsymbol{G}}
\def\no{\nonumber}
\def\V{\mathbb{V}}
\def\wi{\widetilde}
\def\u{\mathrm{U}}
\def\u{\boldsymbol{u}}
\def\H{\mathbb{H}}
\def\n{\boldsymbol{n}}
\newcommand{\R}{\mathbb{R}}
\renewcommand{\d}{\/\mathrm{d}\/}
\newcommand{\Addresses}{{% additional braces for segregating \footnotesize
		\footnote{
			%	\footnotesize
			
			\noindent \textsuperscript{1,2}Department of Mathematics, Indian Institute of Technology Roorkee-IIT Roorkee,
			Haridwar Highway, Roorkee, Uttarakhand 247667, INDIA.\par\nopagebreak
			\noindent 	\textit{e-mail:} \texttt{Pardeep Kumar: pkumar3@ma.iitr.ac.in.}
			
			\noindent  \textit{e-mail:} \texttt{Manil T. Mohan: manilfma@iitr.ac.in, maniltmohan@gmail.com.}
			
			\noindent \textsuperscript{*}Corresponding author.

			\textit{Key words:} Convective Brinkman-Forchheimer equations, inverse source problem, integral overdetermination condition, contraction mapping theorem, well-posedness. 
			
			Mathematics Subject Classification (2020): Primary 35R30; Secondary 35Q35, 35Q30.

}}}
\begin{document}
	
	%	\linenumbers
	
	\title[An inverse problem for 2D CBF equations]{Existence, uniqueness and stability of an inverse problem for two-dimensional convective Brinkman-Forchheimer equations with the integral overdetermination
		\Addresses}
	\author[P. Kumar and M. T. Mohan ]{Pardeep Kumar\textsuperscript{1} and Manil T. Mohan\textsuperscript{2*}}

	\maketitle
	
	\begin{abstract}
		In this article, we study an inverse problem for the following convective Brinkman-Forchheimer (CBF) equations:
		\begin{align*}
			\boldsymbol{u}_t-\mu \Delta\boldsymbol{u}+(\boldsymbol{u}\cdot\nabla)\boldsymbol{u}+\alpha\boldsymbol{u}+\beta|\boldsymbol{u}|^{r-1}\boldsymbol{u}+\nabla p=\boldsymbol{F}:=f \g, \ \ \  \nabla\cdot\boldsymbol{u}=0,
		\end{align*}
		in a bounded domain $\Omega\subset\mathbb{R}^2$ with smooth boundary $\partial\Omega$, where $\alpha,\beta,\mu>0$ and $r\in[1,3]$. The investigated inverse problem consists of reconstructing the vector-valued velocity function $\boldsymbol{u}$, the pressure field $p$ and the scalar function $f$. For the divergence free  initial data $\u_0 \in \L^2(\Omega)$, we prove the existence of a solution to the inverse problem for  two-dimensional CBF equations with the integral overdetermination condition, by showing the existence of a unique fixed point for an equivalent operator equation (using an extension of the contraction mapping theorem). Moreover, we establish the uniqueness and Lipschitz stability results of the solution to the inverse problem for 2D CBF equations with $r \in[1,3]$.
	\end{abstract}

\tableofcontents

	\section{Introduction}\label{sec1}\setcounter{equation}{0}
The convective Brinkman-Forchheimer (CBF) equations describe the motion of incompressible fluid flows in a saturated porous medium (cf. \cite{AB}). This article's primary goal is to discuss the well-posedness of an inverse problem to CBF equations in two dimensions for a divergence free initial data in $\L^2(\Omega)$.
	\subsection{The mathematical model and the direct problem}
	 Let  $\Omega\subset\R^2$  be a bounded domain with a smooth boundary $\partial\Omega$ (at least $\C^2$-boundary). The CBF equations are given by
	\begin{align}
		\u_t-\mu \Delta\u+(\u\cdot\nabla)\u+\alpha\u+\beta|\u|^{r-1}\u+\nabla p=\boldsymbol{F}&:=f \g, \ \text{ in } \ \Omega\times(0,T), \label{1a}\\ \nabla\cdot\u&=0, \ \ \ \ \text{ in } \ \Omega\times(0,T),\label{1b}
	\end{align}
	with initial condition
	\begin{align}\label{1c}
		\u=\u_0, \ \text{ in } \ \Omega \times \{0\},
	\end{align}
	and	boundary condition
	\begin{align}\label{1d}
		\u=\boldsymbol{0},\ \ \text{ on } \ \partial\Omega\times[0,T).
	\end{align}
	Here $\u(x,t) \in \R^2$ represents the velocity field at position $x$ and time $t$, $p(x,t)\in\R$ denotes the pressure field and $\boldsymbol{F}(x,t)\in\R^2$ stands for the external forcing. The constant $\mu$ denotes the positive Brinkman coefficient (effective viscosity), the positive constants $\alpha$ and $\beta$ stand for the Darcy coefficient (permeability of porous medium) and the Forchheimer coefficient (proportional to the porosity of the material), respectively (cf. \cite{KT2}). The absorption exponent $r\in[1,\infty)$ and the cases, $r=3$ and $r>3$, are known as the critical exponent and the fast growing nonlinearity, respectively.  For $\alpha=\beta=0$, we obtain the classical $2 \D$ Navier-Stokes equations (NSE). Thus, one can consider the equations \eqref{1a}-\eqref{1d} as a modification (by introducing an absorption term $\alpha\u+\beta|\u|^{r-1}\u$) of the classical NSE. Thus, one may consider the model \eqref{1a}-\eqref{1d} as NSE with damping.  In order to obtain the uniqueness of the pressure $p$, one can impose the condition $\int_{\Omega}p(x,t)\d x=0, $ for $t\in [0,T]$.   The model given in \eqref{1a}-\eqref{1d} is recognized to be more accurate when the flow velocity is too large for the Darcy's law to be valid alone, and apart from that, the porosity is not too small, so that we call these types of models as \emph{non-Darcy models} (cf. \cite{PAM}).   	  It has been proved  in Proposition 1.1, \cite{KWH}  that the critical homogeneous CBF equations have the same scaling as NSE only when $\alpha=0$ and no scale invariance property for other values of $\alpha$ and $r$. 
	
	Let us now discuss some results available in the literature on the global solvability of the system \eqref{1a}-\eqref{1d} (direct problem). The existence and	uniqueness of weak as well as strong solutions in two and three dimensional bounded domains are available in \cite{SNA,CLF,KT2,MTM4}, etc.,  for the  global solvability results in periodic domains and whole space,	one can refer to \cite{ZCQJ,KWH,ZZXW}, etc. The Navier-Stokes problem modified an  absorption term $|\u|^{r-1}\u$, for $r>1$,  in bounded domains with compact boundary is considered in \cite{SNA}. The existence of Leray-Hopf weak solutions,  for any dimension $d\geq 2$, and its uniqueness for $d=2$ is established in \cite{SNA}. For $d=2,r\in[1,\infty)$ and $d=3,r\in[3,\infty)$ ($2\beta\mu\geq 1$ for $r=3$), the existence and uniqueness of global Leray-Hopf  weak as well as strong solution is established in \cite{MTM4}. 
	% In \cite{MTM3}, the author concerned the asymptotic analysis of strong solutions of two-dimensional CBF equations in Poincar\'e domains and general unbounded domains. Some optimal control problems governed by 2D CBF equations with state constraints is investigated in \cite{MTM5}. 	In \cite{MTM1}, the author established the existence of a global attractor for 2D CBF equations in both bounded and Poincar\'e domains. The existence of regular dissipative solutions and global attractors for the system \eqref{1a}-\eqref{1d}  in three dimensions for the fast growing nonlinearites is proved in \cite{KT2}. 
	\subsection{The inverse problem}
	 Even though the direct problem is important, but it requires the knowledge of physical parameters such as the Brinkman coefficient $\mu$, Darcy coefficient $\alpha$, Forchheimer coefficient $\beta$ and the forcing term $\boldsymbol{F}:=f \g$. When in addition to the solution of the equation, recovery of some physical properties of the investigated object or the effects of the external sources are needed, we require inverse problems to determine a coefficient or to handle the right hand side of the differential equation arising in mathematical modeling of physical phenomena. In such modeled problems, it is more efficient to consider the inverse source problems. However, posing an inverse problem requires some additional information of the solution besides the given initial and boundary conditions. An additional information on a solution of the inverse problem can be  integral overdetermination condition, which is the case considered in this paper.	The integral overdetermination condition considered in this work is given by 
	\begin{align}\label{1e}
		\int_{\Omega}\u(x,t) \cdot \boldsymbol{\omega}(x) \d x=\varphi(t), \ \ t \in [0,T],
	\end{align}
	where $\varphi(t)$ is the measurement data, which  stands for the average velocity on the domain $\Omega$ and	$\boldsymbol{\omega}$ is given quantity, which  corresponds to the type of device used to measure the velocity.
	
	Let the vector-valued external forcing $\boldsymbol{F}$ appearing in \eqref{1a} be represented by $$\boldsymbol{F}(x,t):=f(t) \g(x,t),$$ where the scalar function $f$ is unknown and $\g$ is a given vector-valued function.	The investigation of the nonlinear inverse problem in this article consists of reconstructing the vector-valued velocity function $\u$, the pressure field $p$ and the scalar function $f$ from the system \eqref{1a}-\eqref{1d}, with the integral overdetermination condition \eqref{1e}, and the given functions $\u_0, \boldsymbol{\omega}, \varphi$ and $\g$.
	
The	inverse problems with   final overdetermination  (cf. \cite{IB,NLG,VI1,VI,SY}, etc.) and  integral overdetermination (cf.  \cite{SNAKK,JFGN,YJJF,KK,PKKK,Pk,POV,SGP} etc.) conditions have been well studied in the literature. The existence and uniqueness of the generalized solution of the inverse problem for the nonlinear nonstationary system of Navier-Stokes equations with integral overdetermination is investigated in \cite{IAV}. The existence results of an inverse problem to  NSE with both the integral as well as  final overdetermination data are  proved in  \cite{POV} by using Schauder's fixed point theorem in two and three dimensions. The global well-posedness of the inverse source problem for parabolic systems is examined in \cite{PT}. Under the assumption that the  initial data $\u_0 \in \H$ and the viscosity constant is sufficiently large, the authors in \cite{JF} proved that an inverse problem for 2D NSE with the final overdetermination data is well-posed. To prove the same, they have used Tikhonov's fixed point theorem. In \cite{JFMD}, the authors considered an inverse problem of determining a viscosity coefficient in NSE by observing data in a neighborhood of the boundary. 	For an extensive study on various inverse problems corresponding to NSE and related models, where one requires  to reconstruct the density of external forces or some coefficients of the equations on the basis of final or functional  overdetermination, we refer the interested readers to  \cite{MBFC,AYC2,AYC,AYC1,MC,OYMY,VI,AIK,RYL,POV,PV,VP}, etc., and references therein. 

The existence and uniqueness of  an inverse problem for three-dimensional	nonlinear equations of Kelvin-Voigt fluids is proved in \cite{KK}.  Using the contraction mapping principle, a local in time existence and uniqueness result of an inverse problem for Kelvin-Voigt fluid flow equations with memory term and integral overdetermination condition is obtained in \cite{PKKK}. The authors  in \cite{SNAKK} established the local unique solvability result of an inverse problem for generalized Kelvin-Voigt equation with $p$-Laplacian and damping term with integral overdetermination condition. Recently, the well-posedness of an inverse problem for 2D and 3D convective Brinkman-Forchheimer equations with  final overdetermination data using Schauder's fixed point theorem is examined in \cite{PKMTM}. The results in \cite{PKMTM} are obtained by  assuming sufficient smoothness on the given data ($\u_0\in\H^2(\Omega)\cap\V$). 
\subsection{Technical difficulties}
 We emphasize that, due to a technical difficulty in working with bounded domains, the method employed in \cite{JF,POV,IAV}, etc. (for the initial data $\u_0\in\H$) may only be applicable for $\ r\in[1,3]$ in this article. Note that in the case of bounded domains,  $\mathrm{P}_{\H}(|\u|^{r-1}\u)$ ($\mathrm{P}_{\H}$ is the Helmholtz-Hodge orthogonal projection, see subsection \ref{sub2.1}, \cite{PKMTM}) need not be zero on the boundary, and $\mathrm{P}_{\H}$ and $-\Delta$ are not necessarily commuting (for a counter example, see Example 2.19, \cite{JCR4}). Furthermore, while taking the inner product with $-\Delta\u$ in \eqref{1a}, $-\Delta\u\cdot\n |_{\partial \Omega}\neq 0$ in general and the term with pressure will not vanish (see \cite{KT2}). As a result, the equality (\cite{KWH})
\begin{align*}
	&\int_{\Omega}(-\Delta\u(x))\cdot|\u(x)|^{r-1}\u(x)\d x\no\\&
	=\int_{\Omega} |\nabla\u(x)|^{r-1}\u(x) \d x + 4\bigg( \frac{r-1}{(r+1)^2}\bigg) \int_{\Omega}| \nabla|\u(x)|^\frac{r+1}{2}|^r \d x 
	\nonumber\\&=\int_{\Omega}|\nabla\u(x)|^2|\u(x)|^{r-1}\d x+\frac{r-1}{4}\int_{\Omega}|\u(x)|^{r-3}|\nabla|\u(x)|^2|^2\d x,
\end{align*}
may not be helpful in bounded domains.

	\subsection{Main results}
	By a solution of the inverse problem  \eqref{1a}-\eqref{1e}, we mean by a triplet $(\u, p,f)$ such that 
	\begin{align*}
		&\u\in\mathrm{L}^{\infty}(0,T;\H)\cap\mathrm{L}^{2}(0,T;\V) \cap \mathrm{L}^{r+1}(0,T;\widetilde{\L}^{r+1}),  \ p\in \mathrm{L}^\frac{r+1}{r}(0,T;\mathrm{L}_0^2(\Omega)), \ f \in \mathrm{L}^2(0,T),
	\end{align*}
and in addition, all the relations \eqref{1a}-\eqref{1e} hold. In order to prove the existence and uniqueness of solutions for the above investigated inverse problem, we use the method developed in \cite{POV,IAV}. Note that the work \cite{POV,IAV} does not take into account of the stability of the solution. The major goals of this article are
	\begin{enumerate}
		\item [(i)] the existence and uniqueness of the solution, 
		\item [(ii)] the stability of the solution in the norm of corresponding function spaces, 
	\end{enumerate}
	to the inverse problem \eqref{1a}-\eqref{1e} under the assumptions $\u_0 \in \H, \ \g \in \C([0,T];\ \L^2(\Omega)), \ \varphi \in \mathrm{H}^1(0,T)$, using an extension of the contraction mapping theorem. In contrast to the results obtained for CBF equations in \cite{PKMTM}, the well-posedness of the generalized solution of the inverse problem holds for the initial data $\u_0 \in \H$ in two dimensions for $r \in [1,3]$ and the additional information given in the form of integral overdetermination condition.
	\begin{definition}[\cite{IAV}]\label{def}
		Let $r \in[1,3]$. A triplet $(\u,p,f)$ is said to be a \emph{weak solution} of a nonlinear inverse problem \eqref{1a}-\eqref{1e}  if
		$$\u \in \mathrm{L}^{\infty}(0,T;\H) \cap \mathrm{L}^2(0,T;\V) \cap \mathrm{L}^{r+1}(0,T;\widetilde{\L}^{r+1}), \ p\in \mathrm{L}^\frac{r+1}{r}(0,T;\mathrm{L}_0^2(\Omega)),$$
		$$\int_{\Omega}\u(x,t) \cdot \mathbf{v}(x) \d x \in \mathrm{H}^1(0,T), \  \text{ for all }\  \mathbf{v} \in \V; \ \ f \in \mathrm{L}^2(0,T),$$
		and they satisfy the integral identity
		\begin{align}\label{1f}
			&\frac{\d}{\d t}	\int_{\Omega}\u(x,t) \cdot \mathbf{v}(x) \d x+\mu 	\int_{\Omega}\nabla\u(x,t) \cdot \nabla \mathbf{v}(x) \d x\nonumber\\&+	\int_{\Omega}(\u(x,t) \cdot \nabla)\u(x,t) \cdot \mathbf{v}(x) \d x+\alpha 	\int_{\Omega}\u(x,t) \cdot \mathbf{v}(x) \d x\nonumber\\&+\beta 	\int_{\Omega}|\u(x,t)|^{r-1}\u(x,t) \cdot \mathbf{v}(x) \d x=\int_{\Omega}f(t) \g(x,t) \cdot \mathbf{v}(x) \d x,
		\end{align}
		  along with the initial condition \eqref{1c} and the integral overdetermination condition \eqref{1e}, for any $\mathbf{v} \in \V$.
	\end{definition}
	In this definition, we have incorporated the incompressibility of the fluid velocity and the boundary condition \eqref{1d} as in Definition \ref{def} in the sense that the function $\u(\cdot,t) \in \V$ for a.e. $t \in [0,T]$.	Let us now state the main result in this work on the well-posedness of solutions of the inverse problem \eqref{1a}-\eqref{1e}.
	\begin{theorem}\label{thm2}
		Let $\Omega \subset \mathbb{R}^2, \ \u_0 \in \H, \ \g \in \C([0,T];\L^2(\Omega)), \ \boldsymbol{\omega} \in \H^2(\Omega) \cap \V,  \ \varphi \in \mathrm{H}^1(0,T)$  $$ \text{ and } \ \left| \int_\Omega\g(x,t) \cdot \boldsymbol{\omega}(x) \d x\right| \geq g_0 >0, \ \ (g_0 \equiv \text{constant}), \ \ \ 0\leq t \leq T,$$ and in addition, $\nabla\boldsymbol{\omega}\in\wi\L^{\infty}$ and the compatibility condition
		\begin{align}\label{1i}
			\int_{\Omega}\u_0(x) \cdot \boldsymbol{\omega}(x) \d x=\varphi(0),
		\end{align}
	be satisfied. Then under the assumptions of Theorem \ref{thm3.2} (see below), the following assertions are satisfied for the inverse problem \eqref{1a}-\eqref{1e}:
		\begin{enumerate}
			\item [(i)]  There exists a unique weak solution $(\u,p,f)$ to the inverse problem \eqref{1a}-\eqref{1e}.
			\item [(ii)]  Let $(\u_i,p_i,f_i)$ $(i=1,2)$ be two solutions of the inverse problem \eqref{1a}-\eqref{1e} corresponding to the input data  $(\u_{0i},\varphi_i,\g_i) \ (i=1,2)$. Then there exists a constant $C$ such that
			\begin{align}\label{1k}
				&\nonumber\|\u_1-\u_2\|_{\mathrm{L}^\infty(0,T;\H)}+	\|\u_1-\u_2\|_{\mathrm{L}^2(0,T;\V)}+	\|\u_1-\u_2\|_{\mathrm{L}^{r+1}(0,T;\widetilde{\L}^{r+1})}\\&\quad+\|p_1-p_2\|_{\mathrm{L}^\frac{r+1}{r}(0,T;\mathrm{L}^2(\Omega))}+\|f_1-f_2\|_{\mathrm{L}^2(0,T)} \nonumber\\&\leq C\bigg(\|\u_{01}-\u_{02}\|_{ \H}+\sup_{t\in[0,T]}\|(\g_1-\g_2)(t)\|_{\L^2}+\|\varphi_1-\varphi_2\|_{\mathrm{H}^1(0,T)}\bigg),
			\end{align}
			where $C$ depends on the input data, $\mu,\alpha,\beta,r$,$T$ and $\Omega$.
		\end{enumerate}	
	\end{theorem}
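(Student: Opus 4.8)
\emph{Plan.} Following the method of \cite{POV,IAV}, the idea is to eliminate the unknown $f$ and recast \eqref{1a}--\eqref{1e} as a fixed-point equation for $f$ alone, to solve that equation by an extension of the contraction mapping theorem (which is what Theorem~\ref{thm3.2} is designed to supply), and then to read off the Lipschitz bound \eqref{1k} from an energy estimate for the difference of two solutions. First I would reduce to an operator equation: testing the weak formulation \eqref{1f} with $\boldsymbol{\omega}\in\V$, integrating the viscous term by parts (legitimate since $\boldsymbol{\omega}\in\H^2(\Omega)$), using $\int_\Omega(\u\cdot\nabla)\u\cdot\boldsymbol{\omega}\,\d x=-\int_\Omega(\u\cdot\nabla)\boldsymbol{\omega}\cdot\u\,\d x$ together with the time-differentiated overdetermination $\frac{\d}{\d t}\int_\Omega\u(t)\cdot\boldsymbol{\omega}\,\d x=\varphi'(t)$, and dividing by $\int_\Omega\g(t)\cdot\boldsymbol{\omega}\,\d x$ (allowed since this is $\ge g_0>0$ in modulus), gives an explicit formula $f(t)=(\mathcal B\u)(t)$ expressing $f$ through $\u,\varphi,\varphi',\g,\boldsymbol{\omega}$. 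For $f\in\mathrm L^2(0,T)$ I let $\u=\u(f)$ be the unique weak solution of the direct problem \eqref{1a}--\eqref{1d} for $r\in[1,3]$ (existence and a priori bounds from Theorem~\ref{thm3.2} and the cited direct-problem results) and set $\mathcal Af:=\mathcal B\u(f)$; then $(\u,p,f)$ solves the inverse problem iff $\mathcal Af=f$, because if $f$ is a fixed point then $\psi(t):=\int_\Omega\u(f)(t)\cdot\boldsymbol{\omega}\,\d x$ and $\varphi$ solve the same linear ODE with the same initial value by \eqref{1i}, hence $\psi\equiv\varphi$, so \eqref{1e} holds, and $p$ is recovered from \eqref{1a} by de Rham's theorem.

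\emph{Bounds and contraction.} The standard energy estimate for \eqref{1a}--\eqref{1d} (test with $\u$, use $\langle(\u\cdot\nabla)\u,\u\rangle=0$, Young's inequality and positivity of the $\alpha$- and $\beta$-terms), together with the $\mathrm L^{r+1}$-in-space bound from the Forchheimer term, controls $\|\u(f)\|_{\mathrm L^\infty(0,T;\H)}$, $\|\u(f)\|_{\mathrm L^2(0,T;\V)}$, $\|\u(f)\|_{\mathrm L^{r+1}(0,T;\widetilde{\L}^{r+1})}$ by $\|\u_0\|_\H$ and $\|f\|_{\mathrm L^2(0,T)}\sup_{[0,T]}\|\g\|_{\L^2}$; feeding these into $\mathcal B$ and using $\boldsymbol{\omega}\in\H^2\cap\V$ and $\nabla\boldsymbol{\omega}\in\widetilde{\L}^\infty$ shows $\mathcal A$ maps $\mathrm L^2(0,T)$ into itself and leaves a large enough ball invariant. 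For $f_1,f_2$ with $\u_i=\u(f_i)$, $\w=\u_1-\u_2$, $h=f_1-f_2$, testing the difference equation with $\w$, using the 2D (Ladyzhenskaya) estimate for $\langle(\w\cdot\nabla)\u_2,\w\rangle$, the monotonicity of $\z\mapsto|\z|^{r-1}\z$ and Grönwall's inequality, yields $\|\w(t)\|_\H^2+\mu\int_0^t\|\nabla\w\|_{\L^2}^2\,\d s+\beta\int_0^t\!\int_\Omega(|\u_1|^{r-1}+|\u_2|^{r-1})|\w|^2\,\d x\,\d s\le C\int_0^t|h|^2\,\d s$. Substituting into $\mathcal Af_1-\mathcal Af_2=\mathcal B\u_1-\mathcal B\u_2$ and estimating the viscous, convective and Forchheimer differences (the last via Cauchy--Schwarz against the left-hand side above) gives a Volterra-type inequality $\|\mathcal Af_1-\mathcal Af_2\|^2_{\mathrm L^2(0,t)}\le K\int_0^t\big(\int_0^s|h|^2\,\d\sigma\big)\,\d s$; iterating shows $\mathcal A^m$ is a contraction for $m$ large, so the extension of the contraction mapping theorem produces a unique fixed point $f^\ast$, hence a unique weak solution $(\u^\ast,p^\ast,f^\ast)$, and uniqueness for the inverse problem follows because the $f$-component of any solution is a fixed point of $\mathcal A$.

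\emph{Stability.} For two data sets, $\w=\u_1-\u_2$ solves the difference equation with forcing $h\g_1+f_2(\g_1-\g_2)$ and initial value $\u_{01}-\u_{02}$, so the energy argument above bounds the left side of \eqref{1k} without the $f$- and $p$-terms by $C(\|\u_{01}-\u_{02}\|_\H+\sup_{[0,T]}\|\g_1-\g_2\|_{\L^2}+\|h\|_{\mathrm L^2(0,T)})$. Subtracting the representation formulas for $f_1,f_2$, bounding the difference of the denominators using $|\int_\Omega\g_i\cdot\boldsymbol{\omega}\,\d x|\ge g_0$, and using the nonlinear bounds already obtained, gives $\|h\|_{\mathrm L^2(0,T)}\le C(\|\varphi_1-\varphi_2\|_{\mathrm H^1(0,T)}+\sup_{[0,T]}\|\g_1-\g_2\|_{\L^2})+C\,(\text{terms in }\w)$; absorbing the $\w$-terms as before (or through a time-weighted norm) closes the estimate, and the pressure difference bound in $\mathrm L^{(r+1)/r}(0,T;\mathrm L^2)$ follows from the difference equation and de Rham's theorem.

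\emph{Main obstacle.} The delicate points are all tied to the Forchheimer term $\beta|\u|^{r-1}\u$: since the bounded-domain identities recalled in the introduction are unavailable, one must lean on the monotonicity/accretivity of $\z\mapsto|\z|^{r-1}\z$ and on careful Hölder bookkeeping for $r\in(1,3]$, both in the velocity-difference energy estimate and in the representation formula for $f$; moreover, as is typical for inverse source problems of this kind, no genuine contraction on $[0,T]$ is available, so the Volterra structure of the estimate together with the extended contraction mapping theorem is exactly what makes the argument go through.
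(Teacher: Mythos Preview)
Your proposal is correct and follows essentially the same route as the paper: reduction to the operator equation $f=\mathcal{A}f$ via Theorem~\ref{thm1}, self-mapping of a ball plus the Volterra-type Lipschitz estimate that makes $\mathcal{A}^k$ a contraction (Theorem~\ref{thm3.2} and the theorem after it), and then the stability chain Lemma~\ref{lemma11} $\to$ Lemma~\ref{lemma2} $\to$ pressure estimate via the Poisson equation for $p$. Two minor remarks: the paper handles the Forchheimer difference in $\mathcal{A}f_1-\mathcal{A}f_2$ through Taylor's formula and Gagliardo--Nirenberg/H\"older (splitting the cases $r\in(2,3]$ and $r\in[1,2]$) rather than via your Cauchy--Schwarz against the dissipative term, and the pressure bound is obtained not from de~Rham alone but from elliptic (Cattabriga) regularity for $-\Delta p$, which is what puts $p$ in $\mathrm{L}^{(r+1)/r}(0,T;\mathrm{L}^2)$.
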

	\noindent
The rest of the paper is organized as follows: In section \ref{sec2}, we first state and prove the relation between the solvability of the inverse problem \eqref{1a}-\eqref{1e} and an equivalent operator equation of the second kind (Theorem \ref{thm1}). In section \ref{sec3}, we prove our main result, that is, Theorem \ref{thm2}, by first showing the existence of a solution to the equivalent operator equation by using the extension of the contraction mapping theorem and then demonstrating the uniqueness and stability of the solution to the inverse problem. In Appendix \ref{sec4}, we deduce a number of a-priori estimates that are needed to handle the inverse problem \eqref{1a}-\eqref{1e}.
	\section{Mathematical formulation}\label{sec2}\setcounter{equation}{0}
In this section, we state and prove Theorem \ref{thm1}, which converts the inverse problem \eqref{1a}-\eqref{1e} into an equivalent nonlinear operator equation of the second kind \eqref{1h} and prove their equivalence. We commence this section by introducing function spaces and some standard notations  (cf. Section 2.1, \cite{MTM4,MTM6}), which will be used throughout the paper. 
	\subsection{Function spaces}\label{sub2.1}  Let $\Omega \subset \R^2$ be a bounded domain  with smooth boundary $\partial \Omega$. Let $\C_0^{\infty}(\Omega;\R^2)$ be the space of all infinitely differentiable functions  ($\R^2$-valued) with compact support in $\Omega\subset\R^2$.  Let us define 
	\begin{align*} 
		\mathcal{V}&:=\{\u\in\C_0^{\infty}(\Omega,\R^2):\nabla\cdot\u=0\},\\
		\mathbb{H}&:=\text{the closure of }\ \mathcal{V} \ \text{ in the Lebesgue space } \L^2(\Omega)=\mathrm{L}^2(\Omega;\R^2),\\
		\mathbb{V}&:=\text{the closure of }\ \mathcal{V} \ \text{ in the Sobolev space } \H^1(\Omega)=\mathrm{H}^1(\Omega;\R^2),\\
		\widetilde{\L}^{p}&:=\text{the closure of }\ \mathcal{V} \ \text{ in the Lebesgue space } \L^p(\Omega)=\mathrm{L}^p(\Omega;\R^2),
	\end{align*}
	for $p\in(2,\infty]$. Then, under some smoothness assumptions on the boundary, we characterize the spaces $\H$, $\V$, $\widetilde{\L}^p$ and $\widetilde{\L}^\infty$ as 
	$
	\H=\{\u\in\L^2(\Omega):\nabla\cdot\u=0,\u\cdot\boldsymbol{n}\big|_{\partial\Omega}=0\},$ where $\boldsymbol{n}$ is the unit outward drawn normal to $\partial\Omega$, and $\u\cdot\n\big|_{\partial\Omega}$ should be understood in the sense of trace in $\H^{-1/2}(\partial\Omega)$ (cf. Theorem 1.2, Chapter 1, \cite{Te}),   with the norm  $\|\u\|_{\H}^2:=\int_{\Omega}|\u(x)|^2\d x,
	$
	$
	\V=\{\u\in\H_0^1(\Omega):\nabla\cdot\u=0\},$  with the norm $ \|\u\|_{\V}^2:=\int_{\Omega}|\nabla\u(x)|^2\d x $ (since $\Omega$ is a bounded smooth domain),  $\widetilde{\L}^p=\{\u\in\L^p(\Omega):\nabla\cdot\u=0, \u\cdot\boldsymbol{n}\big|_{\partial\Omega}=0\},$ with the norm $\|\u\|_{\widetilde{\L}^p}^p:=\int_{\Omega}|\u(x)|^p\d x$, and $\widetilde{\L}^\infty=\{\u\in\L^\infty(\Omega):\nabla\cdot\u=0, \u\cdot\boldsymbol{n}\big|_{\partial\Omega}=0\},$ with the norm $\|\u\|_{\widetilde{\L}^\infty}:=\operatorname*{ess\,sup}\limits_{x \in \Omega}|\u(x)|$, respectively. Let $(\cdot,\cdot)$ denote the inner product in the Hilbert space $\H$ and $\langle\cdot,\cdot\rangle$ represents the duality pairing between the spaces $\V$ and its dual $\V'$, and $\wi\L^p$ and its dual $\wi\L^{\frac{p}{p-1}}$. The following well-known Ladyzhenkaya inequality (cf. Lemma 1, \cite{OAL}) $$\|\u\|_{\L^4}\leq 2^{1/4}\|\u\|_{\L^2}^{1/2}\|\nabla\u\|_{\L^2}^{1/2},\ \text{ for all }\ \u\in\H_0^1(\Omega)$$ is used repeatedly in the paper. 
	\iffalse
	It is well-known that $\L^2(\Omega)=\H \oplus \G(\Omega)$, (see \cite{OAL}), where $\G(\Omega)$ is the orthogonal complement of $\H$ in $\L^2(\Omega)$ consisting of all vectors of the form $\nabla \psi$. Here $ \psi$ is 	single-valued, square summable and has first derivatives in $\L^2(\Omega)$.
	\fi
	Wherever needed, we assume that $p_0\in\mathrm{L}^2_0(\Omega),$ where $\mathrm{L}^2_0(\Omega):=\left\{p\in\mathrm{L}^2(\Omega):\int_{\Omega}p(x)\d x=0\right\}$. In the sequel, $C$ denotes a generic constant which may take different values at different places.
	
	Later discussions of the equivalent formulation of an inverse problem \eqref{1a}-\eqref{1e} are based on the works \cite{POV,IAV}, in which the authors established a  relationship between the solvability of the inverse problem for NSE in two dimensions and an operator equation of the second kind.
	\subsection{Equivalent formulation}
	We require the following  input data function requirements:
	$$\u_0 \in \H, \ \g \in \C([0,T];\L^2(\Omega)), \ \boldsymbol{\omega} \in \H^2(\Omega) \cap \V, \  \nabla\boldsymbol{\omega}\in\wi\L^{\infty}, \ \varphi \in \mathrm{H}^1(0,T),$$ $$\text{and} \ \ \left| \int_\Omega\g(x,t) \cdot \boldsymbol{\omega}(x) \d x\right| \geq g_0 >0, \ \ (g_0 \equiv \text{constant}), \ \ \ 0\leq t \leq T.$$
	Under the above assumptions, we derive an operator equation	of the second kind for the scalar function $f$. We define the nonlinear operator $\mathcal{A}:\mathrm{L}^2(0,T) \to \mathrm{L}^2(0,T)$ by
	\begin{align}\label{1g}
		(\mathcal{A}f)(t):=	\frac{1}{g_1(t)}\bigg\{
		\int_{\Omega} \big(\mu\nabla\u \cdot \nabla \boldsymbol{\omega} +	(\u \cdot \nabla)\u \cdot \boldsymbol{\omega}+\alpha 	\u \cdot \boldsymbol{\omega} +\beta 	|\u|^{r-1}\u \cdot \boldsymbol{\omega}\big) \d x+\varphi '(t)\bigg\},
	\end{align}
	where $\u$ is obtained by using the weak solution of the direct problem \eqref{1a}-\eqref{1d} and 
	\begin{align}\label{1g1}
		g_1(t)=\int_{\Omega} \g(x,t)\cdot \boldsymbol{\omega}(x) \d x.
	\end{align}
	We analyze the following nonlinear operator equation of the second kind over the space $\mathrm{L}^2(0,T)$:
	\begin{align}\label{1h}
		f=\mathcal{A}f.
	\end{align}
The solvability of the inverse problem	\eqref{1a}-\eqref{1e} is inevitably connected to the fixed points of the operator $\mathcal{A}$. For this purpose, we first consider a closed ball $\mathcal{D}$ in 
space $\mathrm{L}^2(0,T)$ with center $\varphi'/g_1$ such that
\begin{align}\label{3a1}
	\mathcal{D}=\big\{f: f \in \mathrm{L}^2(0,T) \text{ and } \|f-\varphi'/g_1\|_{\mathrm{L}^2(0,T)} \leq a \big\},
\end{align}
where $$g_1(t)=\int_\Omega\g(x,t) \cdot \boldsymbol{\omega}(x) \d x.$$ 
	\begin{remark}
	Since the equation \eqref{1a} is taken in the divergence free	space $\V$,	the pressure $p(\cdot,\cdot)$ does not appear in the equations \eqref{1f} and \eqref{1g}. The pressure $p(\cdot,\cdot)$ can be recovered from the equation \eqref{1a} after reconstructing the pair $(\u ,f)$.
\end{remark}
	The relation between the solvability of the inverse problem \eqref{1a}-\eqref{1e} and the nonlinear operator equation of the second kind \eqref{1h} is verified by the following theorem.
	\begin{theorem}\label{thm1}
		Let $\Omega \subset \mathbb{R}^2, \ \u_0 \in \H, \ \g \in \C([0,T];\L^2(\Omega)), \ \boldsymbol{\omega} \in \H^2(\Omega) \cap \V,  \ \varphi \in \mathrm{H}^1(0,T)$  $$ \text{ and } \ \left| \int_\Omega\g(x,t) \cdot \boldsymbol{\omega}(x) \d x\right| \geq g_0 >0, \ \ (g_0 \equiv \text{constant}), \ \ \ 0\leq t \leq T.$$
		Then the following assertions hold true:\
		\begin{enumerate}
			\item 	[(i)] If the inverse problem \eqref{1a}-\eqref{1e} has a solution, then the operator equation \eqref{1h} also has a solution.
			\item 	[(ii)] If the operator equation \eqref{1h} has a solution and the compatibility condition \eqref{1i}
			is satisfied, then there exists a solution of the inverse problem \eqref{1a}-\eqref{1e}.
		\end{enumerate}
	\end{theorem}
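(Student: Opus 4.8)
The plan is to prove both implications by one and the same computation: insert the fixed test function $\mathbf{v}=\boldsymbol{\omega}$ into the weak formulation \eqref{1f}, read off the scalar identity it produces, and then use the overdetermination condition \eqref{1e} to eliminate the time derivative. The central object is $\psi(t):=\int_{\Omega}\u(x,t)\cdot\boldsymbol{\omega}(x)\,\d x$, and the whole argument reduces to the fundamental theorem of calculus applied to $\psi-\varphi$, the admissibility of dividing by $g_{1}$ being guaranteed by the hypothesis $|g_{1}(t)|\ge g_{0}>0$ on $[0,T]$.

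For part (i), suppose $(\u,p,f)$ is a weak solution of \eqref{1a}-\eqref{1e}. First I would record that, since $\boldsymbol{\omega}\in\H^{2}(\Omega)\cap\V\hookrightarrow\widetilde{\L}^{\infty}$ with $\nabla\boldsymbol{\omega}\in\widetilde{\L}^{\infty}$ and $\boldsymbol{\omega}|_{\partial\Omega}=\boldsymbol{0}$, every term of \eqref{1f} with $\mathbf{v}=\boldsymbol{\omega}$ is an integrable function of $t$: the viscous term is dominated by $\|\nabla\u\|_{\L^{2}}\|\nabla\boldsymbol{\omega}\|_{\L^{2}}$, the inertia term (after integrating by parts, legitimate because $\nabla\cdot\u=0$ and $\boldsymbol{\omega}$ vanishes on $\partial\Omega$) by $\|\nabla\boldsymbol{\omega}\|_{\widetilde{\L}^{\infty}}\|\u\|_{\H}^{2}$, the Darcy term by $\|\u\|_{\H}\|\boldsymbol{\omega}\|_{\H}$, and the Forchheimer term by $\|\boldsymbol{\omega}\|_{\widetilde{\L}^{\infty}}\|\u\|_{\widetilde{\L}^{r}}^{r}$, all of which lie in $\mathrm{L}^{1}(0,T)$ (for the last, using $\u\in\mathrm{L}^{r+1}(0,T;\widetilde{\L}^{r+1})$, Hölder in $x$ and $r\le 3$). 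By \eqref{1e} the map $t\mapsto\int_{\Omega}\u\cdot\boldsymbol{\omega}\,\d x$ coincides with $\varphi\in\mathrm{H}^{1}(0,T)$, so its weak derivative is $\varphi'$; substituting this into \eqref{1f} and dividing by $g_{1}(t)$ yields precisely $f(t)=(\mathcal{A}f)(t)$ for a.e.\ $t\in(0,T)$, where I use that the $\u$ appearing in \eqref{1g} is --- by uniqueness of the weak solution of the direct problem \eqref{1a}-\eqref{1d} for $r\in[1,3]$ (cf.\ \cite{SNA,KT2,MTM4}) --- the same $\u$ as in the inverse-problem solution. Hence \eqref{1h} has a solution.

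For part (ii), suppose $f^{*}\in\mathrm{L}^{2}(0,T)$ solves $f^{*}=\mathcal{A}f^{*}$. Let $\u$ be the unique weak solution of the direct problem \eqref{1a}-\eqref{1d} with forcing $\boldsymbol{F}=f^{*}\g\in\mathrm{L}^{2}(0,T;\L^{2}(\Omega))$, in the class $\mathrm{L}^{\infty}(0,T;\H)\cap\mathrm{L}^{2}(0,T;\V)\cap\mathrm{L}^{r+1}(0,T;\widetilde{\L}^{r+1})$, and recover $p\in\mathrm{L}^{\frac{r+1}{r}}(0,T;\mathrm{L}_{0}^{2}(\Omega))$ from \eqref{1a} as in the remark above; it then remains only to verify \eqref{1e}. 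Put $\psi(t):=\int_{\Omega}\u(x,t)\cdot\boldsymbol{\omega}(x)\,\d x$. Testing \eqref{1a}-\eqref{1d} against $\boldsymbol{\omega}$, one checks as in part (i) that $\psi\in\mathrm{H}^{1}(0,T)$ and
\[
\psi'(t)=f^{*}(t)\,g_{1}(t)-\int_{\Omega}\big(\mu\nabla\u\cdot\nabla\boldsymbol{\omega}+(\u\cdot\nabla)\u\cdot\boldsymbol{\omega}+\alpha\,\u\cdot\boldsymbol{\omega}+\beta|\u|^{r-1}\u\cdot\boldsymbol{\omega}\big)\,\d x\qquad\text{for a.e.\ }t.
\]
On the other hand, this $\u$ is exactly the function entering the definition \eqref{1g} of $\mathcal{A}$, so the identity $f^{*}=\mathcal{A}f^{*}$ says that the same integral equals $f^{*}(t)g_{1}(t)-\varphi'(t)$; subtracting gives $\psi'(t)=\varphi'(t)$ for a.e.\ $t\in(0,T)$. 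Finally, the compatibility condition \eqref{1i} together with $\u(\cdot,0)=\u_{0}$ gives $\psi(0)=\varphi(0)$, hence $\psi\equiv\varphi$ on $[0,T]$; that is, \eqref{1e} holds and $(\u,p,f^{*})$ is a weak solution of the inverse problem \eqref{1a}-\eqref{1e}.

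I expect the only genuinely delicate ingredient to be the well-posedness and regularity of the direct problem with an $\mathrm{L}^{2}$-in-time forcing coefficient --- in particular the fact that $t\mapsto\int_{\Omega}\u\cdot\boldsymbol{\omega}\,\d x$ is absolutely continuous (indeed in $\mathrm{H}^{1}(0,T)$, which is where $r\le 3$ genuinely enters, cf.\ the technical difficulties discussed above) with the asserted derivative --- together with the well-definedness of $\mathcal{A}$ as a map of $\mathrm{L}^{2}(0,T)$ into itself. Granting these, the equivalence is exactly the elementary argument sketched above; the remaining points (integrability of the individual terms, the integration by parts in the inertia term, Hölder interpolation for the Forchheimer term) are routine.
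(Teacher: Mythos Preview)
Your proposal is correct and follows essentially the same approach as the paper's own proof: test the weak formulation with $\mathbf{v}=\boldsymbol{\omega}$, use the overdetermination condition \eqref{1e} (respectively the fixed-point identity $f=\mathcal{A}f$) to replace the time derivative by $\varphi'$, and for part (ii) integrate $\psi'-\varphi'=0$ using the compatibility condition \eqref{1i}. You are somewhat more explicit than the paper about the integrability of each term and about invoking uniqueness of the direct problem to identify the $\u$ in the definition of $\mathcal{A}$ with the $\u$ from the inverse-problem solution, but the argument is the same.
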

	\begin{proof}
		Let us first prove (i). Let the inverse problem \eqref{1a}-\eqref{1e} has a solution, say $(\u,p,f)$. Multiplying both sides of \eqref{1a} by $\boldsymbol{\omega}(x)$, and integrating by parts, we obtain 
		\begin{align}\label{2a}
			&\frac{\d}{\d t}\int_{\Omega}\u(x,t) \cdot \boldsymbol{\omega}(x) \d x+\mu 	\int_{\Omega}\nabla\u(x,t) \cdot \nabla\boldsymbol{\omega}(x) \d x \nonumber\\&\quad+	\int_{\Omega}(\u(x,t) \cdot \nabla)\u(x,t) \cdot \boldsymbol{\omega}(x) \d x+\alpha 	\int_{\Omega}\u(x,t) \cdot \boldsymbol{\omega}(x) \d x\nonumber\\&\quad+\beta 	\int_{\Omega}|\u(x,t)|^{r-1}\u(x,t) \cdot \boldsymbol{\omega}(x) \d x= f(t)	\int_{\Omega}\g(x,t) \cdot \boldsymbol{\omega}(x) \d x.
		\end{align}
		From the definition \eqref{1g} of the operator $\mathcal{A}$, the integral overdetermination condition \eqref{1e}, and assumption \eqref{1g1}, one can easily deduce from \eqref{2a} that
		$$ \mathcal{A}f=f.$$ This implies that the function $f$ solves the operator equation \eqref{1h}; thereby proving (i).\
		
		We proceed to prove (ii). Let the operator equation \eqref{1h} has a solution, say $f \in \mathrm{L^2}(0,T)$. Upon substituting $f$ into \eqref{1a}, we use the system \eqref{1a}-\eqref{1d} to find a unique weak solution $(\u(\cdot),p(\cdot))$ of the direct problem \eqref{1a}-\eqref{1d}. We claim that the function $\u(\cdot)$ satisfies the integral overdetermination condition \eqref{1e}. By inserting  $\mathbf{v}(x)=\boldsymbol{\omega}(x)$ in \eqref{1f}, and using assumption \eqref{1g1}, we arrive at 
		\begin{align}\label{2b}
			&\frac{\d}{\d t}\int_{\Omega}\u(x,t) \cdot \boldsymbol{\omega}(x) \d x+\mu 	\int_{\Omega}\nabla\u(x,t) \cdot \nabla\boldsymbol{\omega}(x) \d x\nonumber\\&\quad+	\int_{\Omega}(\u(x,t) \cdot \nabla)\u(x,t) \cdot \boldsymbol{\omega}(x) \d x+\alpha 	\int_{\Omega}\u(x,t) \cdot \boldsymbol{\omega}(x) \d x \nonumber\\&\quad+\beta 	\int_{\Omega}|\u(x,t)|^{r-1}\u(x,t) \cdot \boldsymbol{\omega}(x) \d x=f(t) g_1(t).
		\end{align}
		On the other hand, since the function $f$ is a solution of the operator equation \eqref{1h}, we have
		\begin{align}\label{2c}
			&\varphi'(t)+\mu 	\int_{\Omega}\nabla\u(x,t) \cdot \nabla\boldsymbol{\omega}(x) \d x+	\int_{\Omega}(\u(x,t) \cdot \nabla)\u(x,t) \cdot \boldsymbol{\omega}(x) \d x \nonumber\\&\quad+\alpha 	\int_{\Omega}\u(x,t) \cdot \boldsymbol{\omega}(x) \d x+\beta 	\int_{\Omega}|\u(x,t)|^{r-1}\u(x,t) \cdot \boldsymbol{\omega}(x) \d x=f(t) g_1(t).
		\end{align}
		Subtracting the equation \eqref{2c} from the equation \eqref{2b}, we find 
		\begin{align}\label{2d}
			\frac{\d}{\d t}\int_{\Omega}\u(x,t) \cdot \boldsymbol{\omega}(x) \d x-\varphi'(t)=0.
		\end{align}
		Integrating the equation \eqref{2d} with respect to time $t$ from $0$ to $t$, and then using the compatibility condition \eqref{1i}, we obtain
		\begin{align*}
			\int_{\Omega}\u(x,t) \cdot \boldsymbol{\omega}(x) \d x=\varphi(t),
		\end{align*}
		which is the integral overdetermination condition \eqref{1e}. Having obtained this, we can say that the triplet $(\u,p,f)$ is a weak solution of the inverse problem \eqref{1a}-\eqref{1e} and this proves the second assertion of the theorem.
	\end{proof}

	\section{Proof of Theorem \ref{thm2}}\label{sec3}\setcounter{equation}{0}
	The energy estimates obtained in Appendix \ref{sec4} allow us to demonstrate the existence and uniqueness of  solution to the inverse problem \eqref{1a}-\eqref{1e} as well as the stability of the solution obtained. For proving the existence of a solution to the  inverse problem \eqref{1a}-\eqref{1e}, by using Theorem \ref{thm1}, it suffices to show that the nonlinear operator $\mathcal{A}$ has a fixed point in $\mathcal{D}$ and this follows from an application of the extension of the contraction mapping theorem.	Subsequently, arguments for the existence and uniqueness of the solution of the inverse problem \eqref{1a}-\eqref{1e} are based on the works \cite{POV,IAV}, where the existence  and uniqueness of a solution of the inverse problem for 2D NSE has been investigated, by exploiting an extension of the contraction mapping theorem.
	\begin{theorem}[Theorem 2.4, \cite{SLA}]\label{thm3.1}
		Let $(\X,d)$ be a complete metric space and let $\mathcal{A} : \X \to \X$ be a	mapping such that for some positive integer $k$, $\mathcal{A}^k$ is a contraction on $\X$. Then, $\mathcal{A}$	has a unique fixed point.
	\end{theorem}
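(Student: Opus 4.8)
The plan is to reduce the statement to the classical Banach contraction principle applied to the single map $\mathcal{A}^k$. Since $(\X,d)$ is complete and $\mathcal{A}^k:\X\to\X$ is a contraction, the Banach fixed point theorem furnishes a unique point $x^\ast\in\X$ with $\mathcal{A}^k x^\ast=x^\ast$, and moreover $\mathcal{A}^{nk}x_0\to x^\ast$ as $n\to\infty$ for every $x_0\in\X$. The only thing left is to promote $x^\ast$ to a fixed point of $\mathcal{A}$ itself and to check that $\mathcal{A}$ has no other fixed point.

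The crux is the trivial but decisive commutation identity $\mathcal{A}\,\mathcal{A}^k=\mathcal{A}^{k+1}=\mathcal{A}^k\,\mathcal{A}$. Applying $\mathcal{A}$ to the relation $\mathcal{A}^k x^\ast=x^\ast$ yields
\[
\mathcal{A}^k(\mathcal{A}x^\ast)=\mathcal{A}\big(\mathcal{A}^k x^\ast\big)=\mathcal{A}x^\ast ,
\]
so $\mathcal{A}x^\ast$ is again a fixed point of the contraction $\mathcal{A}^k$; by the uniqueness clause in the Banach theorem, $\mathcal{A}x^\ast=x^\ast$, i.e. $x^\ast$ is a fixed point of $\mathcal{A}$.

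For uniqueness, suppose $y^\ast\in\X$ satisfies $\mathcal{A}y^\ast=y^\ast$. Iterating $k$ times gives $\mathcal{A}^k y^\ast=y^\ast$, so $y^\ast$ is a fixed point of $\mathcal{A}^k$ and hence coincides with $x^\ast$. Therefore $\mathcal{A}$ has exactly one fixed point.

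There is no genuine obstacle in this argument: everything rests on the commutation $\mathcal{A}\,\mathcal{A}^k=\mathcal{A}^k\,\mathcal{A}$ together with the uniqueness part of the ordinary contraction principle, and it is worth stressing that neither continuity nor contractivity of $\mathcal{A}$ itself is required — only that some iterate $\mathcal{A}^k$ contracts. This is precisely the flexibility that will be exploited for the operator $\mathcal{A}$ defined in \eqref{1g}: on the ball $\mathcal{D}$ the map $\mathcal{A}$ itself may fail to be a contraction, whereas a sufficiently high iterate $\mathcal{A}^k$ becomes one, owing to the extra $T$-dependent smallness (of the type $\sim T^{k}/k!$) accumulated in the iterated a-priori estimates of the Appendix.
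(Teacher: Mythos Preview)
Your proof is correct and is the standard argument for this classical extension of the Banach contraction principle. Note, however, that the paper does not actually supply a proof of this theorem: it is merely quoted from \cite{SLA} (Theorem 2.4 there) and used as a black box, so there is no ``paper's own proof'' to compare against. Your write-up is precisely the argument one finds in standard fixed-point references, and your closing remark about why this flexibility matters for the operator $\mathcal{A}$ in \eqref{1g} accurately anticipates how the paper exploits the result via the $T^{k}/k!$-type estimates \eqref{3o} and \eqref{3s}.
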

	\subsection{Existence} The solvability of the inverse problem  \eqref{1a}-\eqref{1e} is inevitably connected to the fixed points of the operator $\mathcal{A}$. Now, we demonstrate that the operator $\mathcal A$ has a unique fixed point.	The following theorem provides sufficient conditions under which the operator $\mathcal{A}$ maps the closed ball $\mathcal{D}$ into itself.
	\begin{theorem}\label{thm3.2}
		Let $\Omega \subset \mathbb{R}^2, \ \u_0 \in \H, \ \g \in \C([0,T];\L^2(\Omega)), \ \boldsymbol{\omega} \in \H^2(\Omega) \cap \V, \ \nabla \boldsymbol{\omega} \in \widetilde{\L}^{\infty}, \ \varphi \in \mathrm{H}^1(0,T)$ and $$\left| \int_\Omega\g(x,t) \cdot \boldsymbol{\omega}(x) \d x\right| \geq g_0 >0, \ \ (g_0\equiv \text{constant}), \ \ \ 0\leq t \leq T.$$
		\begin{enumerate}
			\item [(i)]  For $r \in (2,3]$, if 
			\begin{align}\label{3a}
				m_1 <a,
			\end{align}
			where 
			\begin{align}\label{3.3}
				m_1&=\frac{C}{g_0} \bigg[T\bigg\{
				\big(\|\Delta \boldsymbol{\omega}\|_{\H}+\|\boldsymbol{\omega}\|_{\H}\big)\bigg(\|\u_0\|_{\H}+T^{1/2}\widetilde{a}\sup_{t\in[0,T]}\|\g(t)\|_{\L^2}\bigg)\nonumber\\&\qquad+\|\nabla \boldsymbol{\omega}\|_{\widetilde{\L}^{\infty}}\bigg(\|\u_0\|_{\H}^2+ \widetilde{a}^2\sup_{t\in[0,T]}\|\g(t)\|_{\L^2}^2\bigg)\bigg\}^2\nonumber\\&\qquad+  T^\frac{3-r}{r-1}\|\boldsymbol{\omega}\|_{\H^2}^2\bigg(\|\u_0\|_{\H}+T^{1/2}\widetilde{a}\sup_{t\in[0,T]}\|\g(t)\|_{\L^2}\bigg)^\frac{4}{(r-1)}
				\no\\&\qquad \times \bigg(\|\u_0\|_{\H}^2+\widetilde{a}^2\sup_{t\in[0,T]}\|\g(t)\|_{\L^2}^2\bigg)^\frac{2(r-2)}{(r-1)}\bigg]^\frac{1}{2},
			\end{align}
			$$\widetilde{a}=a+\|\varphi'/g_1\|_{\mathrm{L}^2(0,T)},$$ and $a$ is the radius of the closed ball $\mathcal{D}$.
			\item [(ii)] For $r \in [1,2]$, if 
			\begin{align}\label{3b}
				m_2 <a,
			\end{align}
			where 
			\begin{align}\label{3.5}
				m_2&= \frac{CT^{1/2}}{g_0} \bigg\{
				\big(\|\Delta \boldsymbol{\omega}\|_{\H}+\|\boldsymbol{\omega}\|_{\H}\big)\bigg(\|\u_0\|_{\H}+T^{1/2}\widetilde{a}\sup_{t\in[0,T]}\|\g(t)\|_{\L^2}\bigg)\no\\&\quad+\|\nabla \boldsymbol{\omega}\|_{\widetilde{\L}^{\infty}}\bigg(\|\u_0\|_{\H}^2+ \widetilde{a}^2\sup_{t\in[0,T]}\|\g(t)\|_{\L^2}^2\bigg)
				\no\\&\quad +\boldsymbol{\omega}\|_{\H^2}\bigg(\|\u_0\|_{\H}+T^{1/2}\widetilde{a}\sup_{t\in[0,T]}\|\g(t)\|_{\L^2}\bigg)^r\bigg\}.
			\end{align}
		\end{enumerate}
	 Then the operator $\mathcal{A}$ maps the closed ball $\mathcal{D}$ into itself.
	\end{theorem}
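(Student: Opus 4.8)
The plan is to fix an arbitrary $f\in\mathcal{D}$, insert it as the forcing density $f\g$ into the direct problem \eqref{1a}--\eqref{1d}, use the a priori estimates of Appendix~\ref{sec4} to control the resulting unique weak solution $\u$ (whose existence for $\u_0\in\H$ and $r\in[1,3]$ is guaranteed by the results cited in Section~\ref{sec1}), and then estimate $\|\mathcal{A}f-\varphi'/g_1\|_{\mathrm{L}^2(0,T)}$ directly from \eqref{1g}, checking that it does not exceed $a$. To begin, $f\in\mathcal{D}$ gives $\|f\|_{\mathrm{L}^2(0,T)}\le a+\|\varphi'/g_1\|_{\mathrm{L}^2(0,T)}=\widetilde{a}$, whence by the Cauchy--Schwarz inequality in time $\|f\g\|_{\mathrm{L}^1(0,T;\H)}\le T^{1/2}\widetilde{a}\sup_{t\in[0,T]}\|\g(t)\|_{\L^2}$ and $\|f\g\|_{\mathrm{L}^2(0,T;\H)}\le\widetilde{a}\sup_{t\in[0,T]}\|\g(t)\|_{\L^2}$. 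Testing \eqref{1a} with $\u$ (which kills the pressure since $\u(\cdot,t)\in\V$) yields, on the one hand, $\tfrac{\d}{\d t}\|\u(t)\|_{\H}\le\|f(t)\g(t)\|_{\H}$ and hence $\|\u\|_{\mathrm{L}^\infty(0,T;\H)}\le\|\u_0\|_{\H}+T^{1/2}\widetilde{a}\sup_{t\in[0,T]}\|\g(t)\|_{\L^2}$, and on the other hand, after a Poincar\'e and Gr\"onwall argument, $\|\u\|_{\mathrm{L}^\infty(0,T;\H)}^2+\mu\|\u\|_{\mathrm{L}^2(0,T;\V)}^2+\beta\|\u\|_{\mathrm{L}^{r+1}(0,T;\widetilde{\L}^{r+1})}^{r+1}\le C\big(\|\u_0\|_{\H}^2+\widetilde{a}^2\sup_{t\in[0,T]}\|\g(t)\|_{\L^2}^2\big)$. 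These are exactly the quantities that appear in \eqref{3.3} and \eqref{3.5}.

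Next I would estimate, pointwise in $t$ and using $|g_1(t)|\ge g_0$, the four integrals forming $(\mathcal{A}f)(t)-\varphi'(t)/g_1(t)$. For the Brinkman term, integrating by parts (legitimate since $\boldsymbol{\omega}\in\H^2(\Omega)\cap\V$ and $\u(\cdot,t)\in\V$ for a.e.\ $t$) gives $\big|\int_\Omega\nabla\u\cdot\nabla\boldsymbol{\omega}\,\d x\big|=\big|\int_\Omega\u\cdot\Delta\boldsymbol{\omega}\,\d x\big|\le\|\u(t)\|_{\H}\|\Delta\boldsymbol{\omega}\|_{\H}$; the Darcy term is bounded by $\|\u(t)\|_{\H}\|\boldsymbol{\omega}\|_{\H}$; and for the convective term the identity $\int_\Omega(\u\cdot\nabla)\u\cdot\boldsymbol{\omega}\,\d x=-\int_\Omega(\u\cdot\nabla)\boldsymbol{\omega}\cdot\u\,\d x$ (valid because $\nabla\cdot\u=0$ and $\u|_{\partial\Omega}=\boldsymbol{0}$), together with the hypothesis $\nabla\boldsymbol{\omega}\in\widetilde{\L}^\infty$, gives the bound $\|\nabla\boldsymbol{\omega}\|_{\widetilde{\L}^\infty}\|\u(t)\|_{\H}^2$. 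Taking $\mathrm{L}^2(0,T)$-norms, each of these contributes a factor $T^{1/2}$ together with the a priori bounds just listed, and their sum reproduces the bracket $\{\cdots\}$ common to $m_1$ and $m_2$.

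The delicate term is the Forchheimer one, $\beta g_1(t)^{-1}\int_\Omega|\u|^{r-1}\u\cdot\boldsymbol{\omega}\,\d x$, which I bound by $g_0^{-1}\beta\|\boldsymbol{\omega}\|_{\L^\infty}\|\u(t)\|_{\L^r}^r\le Cg_0^{-1}\beta\|\boldsymbol{\omega}\|_{\H^2}\|\u(t)\|_{\L^r}^r$, using the two-dimensional embedding $\H^2(\Omega)\hookrightarrow\L^\infty(\Omega)$. For $r\in[1,2]$ the bounded-domain inequality $\|\u(t)\|_{\L^r}\le|\Omega|^{1/r-1/2}\|\u(t)\|_{\L^2}$ reduces this to a pointwise bound $\lesssim\|\u(t)\|_{\H}^r$, and its $\mathrm{L}^2(0,T)$-norm produces the last summand of \eqref{3.5} (so that, as in $m_2$, everything remains under a single factor $T^{1/2}$). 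For $r\in(2,3]$ I would interpolate $\|\u(t)\|_{\L^r}^r\le\|\u(t)\|_{\H}^{\frac{2}{r-1}}\|\u(t)\|_{\widetilde{\L}^{r+1}}^{\frac{(r-2)(r+1)}{r-1}}$ and then apply H\"older in time with the conjugate pair $\big(\tfrac{r-1}{3-r},\tfrac{r-1}{2(r-2)}\big)$ to obtain
\[
\int_0^T\|\u(t)\|_{\L^r}^{2r}\,\d t\;\le\;T^{\frac{3-r}{r-1}}\,\|\u\|_{\mathrm{L}^\infty(0,T;\H)}^{\frac{4}{r-1}}\,\Big(\|\u\|_{\mathrm{L}^{r+1}(0,T;\widetilde{\L}^{r+1})}^{r+1}\Big)^{\frac{2(r-2)}{r-1}},
\]
and substituting the a priori bounds gives precisely the second summand inside the bracket of \eqref{3.3}. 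I expect this last step to be the main obstacle: the H\"older exponent $\tfrac{r-1}{2(r-2)}$ must be $\ge1$ and the interpolation exponent $\tfrac{(r-2)(r+1)}{r-1}$ must not exceed $r+1$, and both requirements hold exactly when $r\le3$ — this is where the standing restriction $r\in[1,3]$ is consumed.

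Finally, combining the four estimates — the Brinkman, Darcy and convective contributions (each pointwise $\le\text{const}\times\|\u(t)\|_{\H}$ or $\|\u(t)\|_{\H}^2$, hence $\le\text{const}\times T^{1/2}\{\cdots\}$ in $\mathrm{L}^2(0,T)$) together with the Forchheimer one — and using $\sqrt{x}+\sqrt{y}\le\sqrt{2(x+y)}$ to absorb the sum under a single square root, one obtains $\|\mathcal{A}f-\varphi'/g_1\|_{\mathrm{L}^2(0,T)}\le m_1$ when $r\in(2,3]$ and $\le m_2$ when $r\in[1,2]$. The hypotheses $m_1<a$, respectively $m_2<a$, then force $\mathcal{A}f\in\mathcal{D}$, which is the assertion. (The same computation also shows $\mathcal{A}f\in\mathrm{L}^2(0,T)$, since $\varphi'/g_1\in\mathrm{L}^2(0,T)$: indeed $\varphi\in\mathrm{H}^1(0,T)$ and $g_1$ is continuous on $[0,T]$ with $|g_1|\ge g_0>0$.)
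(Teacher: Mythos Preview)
Your proposal is correct and follows essentially the same route as the paper's proof: fix $f\in\mathcal{D}$ so that $\|f\|_{\mathrm{L}^2(0,T)}\le\widetilde a$, bound the four integrals in $\mathcal{A}f-\varphi'/g_1$ pointwise (Brinkman via integration by parts against $\Delta\boldsymbol{\omega}$, convection via the cancellation identity and $\nabla\boldsymbol{\omega}\in\widetilde{\L}^\infty$, Darcy trivially, Forchheimer via $\H^2\hookrightarrow\L^\infty$ and $\|\u\|_{\widetilde{\L}^r}^r$), then split cases: for $r\in[1,2]$ use $\|\u\|_{\widetilde{\L}^r}\le|\Omega|^{1/r-1/2}\|\u\|_{\H}$, and for $r\in(2,3]$ use the interpolation $\|\u\|_{\widetilde{\L}^r}^{r}\le\|\u\|_{\H}^{2/(r-1)}\|\u\|_{\widetilde{\L}^{r+1}}^{(r-2)(r+1)/(r-1)}$ followed by H\"older in time with exponent $\tfrac{r-1}{2(r-2)}\ge1$ (valid exactly for $r\le3$), and finally insert the energy estimates of Lemma~\ref{lemma1}. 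One cosmetic remark: the paper obtains \eqref{E2} directly from Young's inequality using the damping term $\alpha\|\u\|_{\H}^2$ rather than a Gr\"onwall/Poincar\'e step, but the resulting bound is the same.
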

	\begin{proof}
		Let $f$ be an arbitrary fixed function in $\mathcal{D}$, then by the definition of the ball $\mathcal{D}$, we deduce that 
		\begin{align}\label{3c}
			\|f\|_{\mathrm{L}^2(0,T)} \leq \widetilde{a}.
		\end{align}
		\vskip 0.2 cm
		\noindent\textbf{Case I:} \emph{ $r\in (2,3]$.}
		The norm of the function $\mathcal{A}f-\varphi'/g_1$ can be estimated as follows:
		\begin{align}\label{3d}
			\|\mathcal{A}f-\varphi'/g_1\|_{\mathrm{L}^2(0,T)}&=\bigg(\int_0^T|\mathcal{A}f-\varphi'/g_1|^2 \d t\bigg)^\frac{1}{2}\nonumber\\&\leq \frac{1}{g_0} \bigg[\int_0^T\bigg(\mu \|\Delta \boldsymbol{\omega}\|_{\H}\|\u(t)\|_{\H}+\|\nabla \boldsymbol{\omega}\|_{\widetilde{\L}^{\infty}}\|\u(t)\|_{\H}^2\nonumber\\&\quad+\alpha \|\boldsymbol{\omega}\|_{\H}\|\u(t)\|_{\H}+\beta \|\boldsymbol{\omega}\|_{\widetilde{\L}^\infty} \|\u(t)\|_{\widetilde{\L}^{r}}^{r}\bigg)^2 \d t\bigg]^\frac{1}{2}.
		\end{align}
		We estimate $\int_0^T\|\u(t)\|_{\widetilde{\L}^{r}}^{2r} \d t$ by using interpolation and H\"older's inequalities as
		\begin{align}\label{3e}
			\int_0^T	\|\u(t)\|_{\widetilde{\L}^{r}}^{2r} \d t &\leq \int_0^T\left(	\|\u(t)\|_{\H}^\frac{2}{r(r-1)}	\|\u(t)\|_{\widetilde{\L}^{r+1}}^\frac{(r-2)(r+1)}{r(r-1)} \right)^{2r} \d t\no\\& \leq \int_0^T \left(	\|\u(t)\|_{\H}^\frac{4}{(r-1)}	\|\u(t)\|_{\widetilde{\L}^{r+1}}^\frac{2(r-2)(r+1)}{(r-1)} \right) \d t \no\\& \leq T^\frac{3-r}{r-1} \sup_{t\in[0,T]}\|\u(t)\|_{\H}^\frac{4}{(r-1)} \left(\int_0^T \|\u(t)\|_{\widetilde{\L}^{r+1}}^{r+1}\d t\right)^\frac{2(r-2)}{(r-1)}.
		\end{align}
		Substituting the estimate \eqref{3e} in \eqref{3d}, we obtain
		\begin{align}\label{3f}
			&\|\mathcal{A}f-\varphi'/g_1\|_{\mathrm{L}^2(0,T)}
			\no\\&\quad\leq \frac{C}{g_0} \bigg[T\bigg\{ \big(\|\Delta \boldsymbol{\omega}\|_{\H}+\|\boldsymbol{\omega}\|_{\H}\big)\sup_{t\in[0,T]}\|\u(t)\|_{\H}+\|\nabla \boldsymbol{\omega}\|_{\widetilde{\L}^{\infty}}\sup_{t\in[0,T]}\|\u(t)\|_{\H}^2\bigg\}^2\no\\&\qquad+T^\frac{3-r}{r-1} \|\boldsymbol{\omega}\|_{\H^2}^2 \sup_{t\in[0,T]} \|\u(t)\|_{\H}^\frac{4}{(r-1)} \left(\int_0^T \|\u(t)\|_{\widetilde{\L}^{r+1}}^{r+1}\d t\right)^\frac{2(r-2)}{(r-1)}\bigg]^\frac{1}{2}.
		\end{align}
		With reference to \eqref{E1} and \eqref{E2}, it can be seen from \eqref{3f} that
		\begin{align}\label{3f1}
			&\|\mathcal{A}f-\varphi'/g_1\|_{\mathrm{L}^2(0,T)}	\nonumber\\&\quad\leq \frac{C}{g_0} \bigg[T\bigg\{
			\big(\|\Delta \boldsymbol{\omega}\|_{\H}+\|\boldsymbol{\omega}\|_{\H}\big)\bigg(\|\u_0\|_{\H}+T^{1/2}\sup_{t\in[0,T]}\|\g(t)\|_{\L^2}\|f\|_{\mathrm{L^2}(0,T)}\bigg)\nonumber\\&\qquad+\|\nabla \boldsymbol{\omega}\|_{\widetilde{\L}^{\infty}}\bigg(\|\u_0\|_{\H}^2+\frac{1}{\alpha}\sup_{t\in[0,T]}\|\g(t)\|_{\L^2}^2\|f\|_{\mathrm{L^2}(0,T)}^2\bigg)\bigg\}^2\nonumber\\&\qquad+  T^\frac{3-r}{r-1}\|\boldsymbol{\omega}\|_{\H^2}^2\bigg(\|\u_0\|_{\H}+T^{1/2}\sup_{t\in[0,T]}\|\g(t)\|_{\L^2}\|f\|_{\mathrm{L^2}(0,T)}\bigg)^\frac{4}{(r-1)}
			\no\\&\qquad \times \bigg\{\frac{1}{2\beta}\bigg(\|\u_0\|_{\H}^2+\frac{1}{\alpha}\sup_{t\in[0,T]}\|\g(t)\|_{\L^2}^2\|f\|_{\mathrm{L^2}(0,T)}^2\bigg)\bigg\}^\frac{2(r-2)}{(r-1)}\bigg]^\frac{1}{2}.
		\end{align}
		Using \eqref{3c} in \eqref{3f1}, we deduce that
		\begin{align}\label{3f2}
			\|\mathcal{A}f-\varphi'/g_1\|_{\mathrm{L}^2(0,T)} \leq m_1,
		\end{align}
		where $m_1$ is defined in \eqref{3.3}. One can take $T$ sufficiently small such that $m_1 <a$ (if needed $a$ can be chosen sufficiently large so that $a>\|\u_0\|_{\H}$).  For the case $r=3$, apart from smallness of $T$, one may need to restrict the input data also. Using this relation, the estimate \eqref{3f2} immediately implies that the nonlinear operator $\mathcal{A}$ brings the ball $\mathcal{D}$ into itself.
		
		\vskip 0.2 cm
		\noindent\textbf{Case II:} \emph{ $r\in [1,2]$.}  From \eqref{3d},	the norm of the function $\mathcal{A}f-\varphi'/g_1$ can be estimated as follows  
		\begin{align}\label{3g}
			&\|\mathcal{A}f-\varphi'/g_1\|_{\mathrm{L}^2(0,T)}
			\no\\&\quad\leq \frac{C}{g_0} \bigg[T\bigg\{ \big(\|\Delta \boldsymbol{\omega}\|_{\H}+\|\boldsymbol{\omega}\|_{\H}\big)\sup_{t\in[0,T]}\|\u(t)\|_{\H}+\|\nabla \boldsymbol{\omega}\|_{\widetilde{\L}^{\infty}}\sup_{t\in[0,T]}\|\u(t)\|_{\H}^2\bigg\}^2\no\\&\qquad+|\Omega|^{2-r} \|\boldsymbol{\omega}\|_{\H^2}^2\int_0^T\|\u(t)\|_{\H}^{2r} \d t \bigg]^\frac{1}{2}
			\no\\&\quad\leq \frac{CT^{1/2}}{g_0} \bigg\{ \big(\|\Delta \boldsymbol{\omega}\|_{\H}+\|\boldsymbol{\omega}\|_{\H}\big)\sup_{t\in[0,T]}\|\u(t)\|_{\H}+\|\nabla \boldsymbol{\omega}\|_{\widetilde{\L}^{\infty}}\sup_{t\in[0,T]}\|\u(t)\|_{\H}^2\no\\&\qquad+ \|\boldsymbol{\omega}\|_{\H^2}\sup_{t\in[0,T]}\|\u(t)\|_{\H}^{r} \bigg\},
		\end{align}
		where $|\Omega|$ is the Lebesgue measure of $\Omega$.	Substituting the estimates \eqref{E1} and \eqref{E2} in the inequality \eqref{3g}, we get 
		\begin{align}\label{3g1}
			&\|\mathcal{A}f-\varphi'/g_1\|_{\mathrm{L}^2(0,T)}
			\no\\&\quad\leq \frac{CT^{1/2}}{g_0}  \bigg\{
			\big(\|\Delta \boldsymbol{\omega}\|_{\H}+\|\boldsymbol{\omega}\|_{\H}\big)\bigg(\|\u_0\|_{\H}+T^{1/2}\sup_{t\in[0,T]}\|\g(t)\|_{\L^2}\|f\|_{\mathrm{L^2}(0,T)}\bigg)\nonumber\\&\qquad+\|\nabla \boldsymbol{\omega}\|_{\widetilde{\L}^{\infty}}\bigg(\|\u_0\|_{\H}^2+\frac{1}{\alpha}\sup_{t\in[0,T]}\|\g(t)\|_{\L^2}^2\|f\|_{\mathrm{L^2}(0,T)}^2\bigg)\no\\&\qquad
			+\|\boldsymbol{\omega}\|_{\H^2}\bigg(\|\u_0\|_{\H}+T^{1/2}\sup_{t\in[0,T]}\|\g(t)\|_{\L^2}\|f\|_{\mathrm{L^2}(0,T)}\bigg)^r\bigg\}.
		\end{align}
		Using the relation \eqref{3c} in \eqref{3g1} results to
		\begin{align}\label{3h}
			\|\mathcal{A}f-\varphi'/g_1\|_{\mathrm{L}^2(0,T)} \leq m_2,
		\end{align}
		where $m_2$ is defined in \eqref{3.5}. One can choose $T$ sufficiently small so that $m_2 <a$. Considering this relationship, the estimate \eqref{3h} implies that the nonlinear operator $\mathcal{A}$ brings the ball $\mathcal{D}$ into itself.\
		
		As a result, for $r \in [1,3]$, the  nonlinear operator $\mathcal{A}$ maps the ball $\mathcal{D}$ into itself, completing the proof of the theorem.
	\end{proof}
	The next theorem establishes the existence of a solution to the inverse problem \eqref{1a}-\eqref{1e}.
	\begin{theorem}
		Let $\Omega \subset \mathbb{R}^2, \ \u_0 \in \H, \ \g \in \C([0,T];\L^2(\Omega)), \ \boldsymbol{\omega} \in \H^2(\Omega) \cap \V, \ \nabla \boldsymbol{\omega} \in \widetilde{\L}^{\infty}, \ \varphi \in \mathrm{H}^1(0,T)$ and $$\left| \int_\Omega\g(x,t) \cdot \boldsymbol{\omega}(x) \d x\right| \geq g_0 >0, \ \ (g_0\equiv constant), \ \ \ 0\leq t \leq T.$$
		Also assume that the nonlinear operator $\mathcal{A}$ maps the ball $\mathcal{D}$ into itself. Then there exists a positive integer $k$ such that the operator $\mathcal{A}^k$ is a contraction mapping in the ball $\mathcal{D}$.
	\end{theorem}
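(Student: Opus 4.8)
The goal is to show that some iterate $\mathcal{A}^k$ is a contraction on $\mathcal{D}$, so that Theorem \ref{thm3.1} applies and $\mathcal{A}$ has a unique fixed point. First I would fix $f_1, f_2 \in \mathcal{D}$, let $\u_1, \u_2$ be the corresponding weak solutions of the direct problem \eqref{1a}-\eqref{1d} with right-hand sides $f_1 \g$ and $f_2 \g$, and set $\w = \u_1 - \u_2$. Writing $\w$ as the solution of the difference equation (the linearized CBF system with forcing $(f_1 - f_2)\g$ and the differences $-((\u_1\cdot\nabla)\u_1 - (\u_2\cdot\nabla)\u_2)$ and $-\beta(|\u_1|^{r-1}\u_1 - |\u_2|^{r-1}\u_2)$ on the right), I would test with $\w$ and use the monotonicity of the operator $\beta(|\cdot|^{r-1}(\cdot))$ together with the Ladyzhenskaya inequality and the a-priori bounds \eqref{E1}-\eqref{E2} on $\u_1, \u_2$ in $\mathrm{L}^\infty(0,T;\H) \cap \mathrm{L}^2(0,T;\V)$. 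A Gronwall-type argument then yields an estimate of the form
\begin{align*}
	\|\w(t)\|_{\H}^2 + \mu\int_0^t \|\w(s)\|_{\V}^2 \d s \leq C \int_0^t \|(f_1 - f_2)(s)\|_{\mathrm{L}^2}^2 \|\g(s)\|_{\L^2}^2 \d s \cdot e^{CT},
\end{align*}
and hence $\sup_{s\le t}\|\w(s)\|_{\H}^2 \le C_1 \int_0^t \|(f_1-f_2)(s)\|^2 \d s$ for a constant $C_1$ depending on the data, $\mu, \alpha, \beta, r, T, \Omega$.

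**From the velocity estimate to the operator estimate.** Next I would return to the definition \eqref{1g} of $\mathcal{A}$ and estimate $(\mathcal{A}f_1)(t) - (\mathcal{A}f_2)(t)$. Subtracting, the $\varphi'$ term cancels, and we are left with $1/g_1(t)$ times the integral over $\Omega$ of $\mu\nabla\w\cdot\nabla\boldsymbol{\omega}$, the difference of the convective terms tested against $\boldsymbol{\omega}$, $\alpha\,\w\cdot\boldsymbol{\omega}$, and the difference of the Forchheimer terms tested against $\boldsymbol{\omega}$. Using $|g_1(t)| \geq g_0$, integrating by parts to move the Laplacian onto $\boldsymbol{\omega}$ (legitimate since $\boldsymbol{\omega} \in \H^2(\Omega)\cap\V$), using $\nabla\boldsymbol{\omega} \in \widetilde{\L}^\infty$ and $\boldsymbol{\omega}\in\widetilde{\L}^\infty$ (which follows from $\H^2(\Omega) \hookrightarrow \L^\infty$ in 2D), and bounding the Forchheimer difference via $\big||\u_1|^{r-1}\u_1 - |\u_2|^{r-1}\u_2\big| \le r(|\u_1|^{r-1} + |\u_2|^{r-1})|\w|$ together with the uniform $\H$-bounds and Hölder, I expect an inequality of the shape
\begin{align*}
	|(\mathcal{A}f_1)(t) - (\mathcal{A}f_2)(t)| \leq C_2\big(\|\w(t)\|_{\H} + \|\w(t)\|_{\H}^{\gamma}\big)
\end{align*}
for suitable $\gamma$ depending on $r$ (treating the two ranges $r\in(2,3]$ and $r\in[1,2]$ as in Theorem \ref{thm3.2}), where $C_2$ absorbs $g_0^{-1}, \|\boldsymbol{\omega}\|_{\H^2}, \|\nabla\boldsymbol{\omega}\|_{\widetilde{\L}^\infty}$ and the a-priori bounds. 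Squaring, integrating in $t$, and inserting the velocity estimate from the previous step gives
\begin{align*}
	\|\mathcal{A}f_1 - \mathcal{A}f_2\|_{\mathrm{L}^2(0,t)}^2 \leq C_3 \int_0^t \|(f_1-f_2)(s)\|_{\mathrm{L}^2}^2 \d s.
\end{align*}

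**Iterating.** The crucial point is that the last estimate is not merely a bound by $C_3 t\,\|f_1-f_2\|_{\mathrm{L}^2(0,T)}^2$ but by the \emph{integral} $\int_0^t \|(f_1-f_2)(s)\|^2\d s$; more precisely, tracking the argument on a subinterval $[0,t]$ gives a Volterra-type bound $\|(\mathcal{A}f_1 - \mathcal{A}f_2)\|_{\mathrm{L}^2(0,t)}^2 \le C_3\int_0^t\|(f_1-f_2)\|_{\mathrm{L}^2(0,s)}^2\,\mathrm{more carefully}$. Iterating $\mathcal{A}$ and using the standard induction $\int_0^T \cdots \le (C_3 T)^k/k!\,\|f_1-f_2\|_{\mathrm{L}^2(0,T)}^2$ shows that for $k$ large enough $(C_3 T)^k/k! < 1$, so $\mathcal{A}^k$ is a contraction on the complete metric space $\mathcal{D}$ (closed ball in $\mathrm{L}^2(0,T)$, which is complete, and $\mathcal{A}$ maps it into itself by the preceding theorem). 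Theorem \ref{thm3.1} then gives a unique fixed point, i.e. a solution of \eqref{1h}.

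**Main obstacle.** The delicate step is controlling the difference of the Forchheimer terms $\beta(|\u_1|^{r-1}\u_1 - |\u_2|^{r-1}\u_2)$ tested against $\boldsymbol{\omega}$ in the operator estimate, and the analogous term in the velocity energy estimate for $\w$, uniformly for $r\in[1,3]$. Monotonicity handles the sign in the energy estimate, but to get the integral (Volterra) structure one must be careful not to lose powers of $\|\w\|$ to the nonlinearity: in the range $r\in(2,3]$ one has to interpolate the $\widetilde{\L}^r$-norm of $\w$ between $\H$ and $\widetilde{\L}^{r+1}$ and use the a-priori $\mathrm{L}^{r+1}(0,T;\widetilde{\L}^{r+1})$-bounds on $\u_1,\u_2$, exactly paralleling \eqref{3e}, so that the resulting estimate is still linear in the $\mathrm{L}^2(0,T)$-norm of $f_1-f_2$ after the Gronwall step. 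Keeping all constants independent of $f_1,f_2\in\mathcal{D}$ (they depend only on $\widetilde a$ and the data) is what makes the iteration work.
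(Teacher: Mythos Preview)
Your proposal is correct and follows essentially the same route as the paper: derive the velocity-difference estimate \eqref{E61} by energy methods and Gronwall, plug it into the definition of $\mathcal{A}$ (integrating by parts so that $\Delta\boldsymbol{\omega}$ absorbs the diffusion term and $\nabla\boldsymbol{\omega}\in\widetilde\L^\infty$ handles the convective difference), treat the Forchheimer difference via the mean-value/Taylor bound and a case split $r\in(2,3]$ versus $r\in[1,2]$, and then iterate the resulting Volterra-type inequality to produce the $1/k!$ factor. One small correction: the pointwise bound you write, $|(\mathcal{A}f_1-\mathcal{A}f_2)(t)|\le C_2(\|\w(t)\|_\H+\|\w(t)\|_\H^\gamma)$, is not quite available for $r\in(2,3]$ because the factor $\|\u_i\|_{\widetilde\L^{2(r-1)}}^{r-1}$ (coming from H\"older on the Forchheimer difference) involves $\|\u_i\|_\V$ via Gagliardo--Nirenberg and is only controlled after time integration; the paper therefore works directly with $\int_0^T|\mathcal{A}f_1-\mathcal{A}f_2|^2\d t$ and bounds $\int_0^T\||\u_1|^{r-1}\u_1-|\u_2|^{r-1}\u_2\|_{\L^1}^2\d t$ by $C\sup_t\|\w(t)\|_\H^2$ times data-dependent quantities (equations \eqref{3j}--\eqref{3j1}), keeping the estimate linear in $\sup_t\|\w\|_\H^2$ and hence in $\|f_1-f_2\|_{\mathrm L^2(0,t)}^2$, exactly as you anticipate in your final paragraph.
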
 
	\begin{proof}
		Let both of the functions $f_1$ and $f_2$ belong to the ball $\mathcal{D}$. Let $\u_1$ and $\u_2$ be the functions corresponding to the coefficients $f_1$ and
		$f_2$, respectively. By the definition of the operator $\mathcal{A}$, we have 
		\begin{align}\label{3i}
			\left\|\mathcal{A}f_1-\mathcal{A}f_2\right\|_{\mathrm{L}^2(0,T)}^2&=\int_0^T\left|\mathcal{A}f_1-\mathcal{A}f_2\right|^2 \d t\no\\&\leq \frac{1}{g_0^2} \int_0^T\bigg(\mu \|(\u_1-\u_2)(t)\|_{\H}\|\Delta \boldsymbol{\omega}\|_{\H}+\|\nabla \boldsymbol{\omega}\|_{\widetilde{\L}^{\infty}}\|(\u_1-\u_2)(t)\|_{\H}\no\\&\quad \times\big(\|\u_1(t)\|_{\H}+\|\u_2(t)\|_{\H}\big)+\alpha \|\boldsymbol{\omega}\|_{\H}\|(\u_1-\u_2)(t)\|_{\H}\no\\&\quad+\beta \|\boldsymbol{\omega}\|_{\widetilde{\L}^\infty} \||\u_1(t)|^{r-1}\u_1(t)-|\u_2(t)|^{r-1}\u_2(t)\|_{\L^1}\bigg)^2 \d t
			\no\\&\leq \frac{C}{g_0^2} \bigg[T\sup_{t\in[0,T]}\|(\u_1-\u_2)(t)\|_{\H}^2
			\bigg\{ \|\Delta \boldsymbol{\omega}\|_{\H}+\|\boldsymbol{\omega}\|_{\H}\no\\&\quad+\|\nabla \boldsymbol{\omega}\|_{\widetilde{\L}^{\infty}}\bigg(\sup_{t\in[0,T]}\|\u_1(t)\|_{\H}+\sup_{t\in[0,T]}\|\u_2(t)\|_{\H}\bigg)\bigg\}^2\no\\&\quad+ \|\boldsymbol{\omega}\|_{\H^2}^2 \int_0^T\left\||\u_1(t)|^{r-1}\u_1(t)-|\u_2(t)|^{r-1}\u_2(t)\right\|_{\L^1}^2 \d t\bigg].
		\end{align}
		Let us define $h(\u):=|\u|^{r-1}\u$. Then, by applying Taylor's formula (Theorem 7.9.1, \cite{PGC}), we obtain (cf. \cite{MTM6})
		\begin{align}\label{3j}
			&\int_0^T\left\||\u_1|^{r-1}\u_1-|\u_2|^{r-1}\u_2\right\|_{\L^1}^2 \d t\no\\&=\int_0^T\bigg\|\int_0^1 h'(\theta \u_1+(1-\theta)\u_2) \d \theta (\u_1-\u_2)\bigg\|_{\L^1}^2 \d t
			\nonumber\\&\leq \int_0^T\sup_{0<\theta<1}\big\|\big(\u_1(t)-\u_2(t)\big)\big|\theta\u_1(t)+(1-\theta)\u_2(t)\big|^{r-1} \no\\&\quad+(r-1)\big(\theta\u_1(t)+(1-\theta)\u_2(t)\big)\big|\theta\u_1(t)+(1-\theta)\u_2(t)\big|^{r-3} \no\\& \quad\times\big((\theta\u_1(t)+(1-\theta)\u_2(t)) \cdot (\u_1(t)-\u_2(t))\big) \big\|_{\L^1}^2 \d t
			 \no\\& \leq C\int_0^T \sup_{0 <\theta<1}\big\||\theta \u_1(t)+(1-\theta)\u_2(t)|^{r-1}\big\|_{\H}^2\|(\u_1-\u_2)(t)\|_{\H}^2 \d t
			\no\\&	\leq C\int_0^T \|(\u_1-\u_2)(t)\|_{\H}^2 \left( \| \u_1(t)\|_{\widetilde{\L}^{2(r-1)}}^{2(r-1)}+ \| \u_2(t)\|_{\widetilde{\L}^{2(r-1)}}^{2(r-1)}\right)\d t
			\no\\&\leq C \sup_{t\in[0,T]}\|(\u_1-\u_2)(t)\|_{\H}^2\int_0^T\left( \| \u_1(t)\|_{\widetilde{\L}^{2(r-1)}}^{2(r-1)}+ \| \u_2(t)\|_{\widetilde{\L}^{2(r-1)}}^{2(r-1)}\right)\d t.
		\end{align}
		\vskip 0.2 cm
		\noindent\textbf{Case I:} \emph{ $r\in (2,3]$.}
		An application of  Gagliardo-Nirenberg's and H\"older's inequalities in \eqref{3j} yields
		\begin{align}\label{3j1}
			&\int_0^T\left\||\u_1(t)|^{r-1}\u_1(t)-|\u_2(t)|^{r-1}\u_2(t)\right\|_{\L^1}^2 \d t
			\no\\&\quad\leq C \sup_{t\in[0,T]}\|(\u_1-\u_2)(t)\|_{\H}^2\int_0^T\left(\|\u_1(t)\|_{\H}^2 \| \u_1(t)\|_{\V}^{2(r-2)}+ \|\u_2(t)\|_{\H}^2 \| \u_2(t)\|_{\V}^{2(r-2)}\right)\d t
			\no\\&\quad\leq C \sup_{t\in[0,T]}\|(\u_1-\u_2)(t)\|_{\H}^2
			\bigg(\sup_{t\in[0,T]} \| \u_1(t)\|_{\H}^{2}\int_0^T \| \u_1(t)\|_{\V}^{2(r-2)} \d t	\no\\&\qquad+\sup_{t\in[0,T]}\| \u_2(t)\|_{\H}^{2}\int_0^T \| \u_2(t)\|_{\V}^{2(r-2)} \d t\bigg)
			\no\\&\quad\leq C T^{3-r} \sup_{t\in[0,T]}\|(\u_1-\u_2)(t)\|_{\H}^2 \bigg\{\sup_{t\in[0,T]} \| \u_1(t)\|_{\H}^{2}\bigg(\int_0^T \| \u_1(t)\|_{\V}^2 \d t\bigg)^{r-2}\no\\&\qquad+\sup_{t\in[0,T]}\| \u_2(t)\|_{\H}^{2}\bigg(\int_0^T \| \u_2(t)\|_{\V}^2 \d t\bigg)^{r-2}\bigg\}.
		\end{align}
		Substituting the relations \eqref{E61} and \eqref{3j1} in \eqref{3i}, we arrive at 
		\begin{align}\label{3k}
			&\left\|\mathcal{A}f_1-\mathcal{A}f_2\right\|_{\mathrm{L}^2(0,T)}^2
			\no\\&\quad\leq \frac{C}{g_0^2} \bigg(\sup_{t\in[0,T]}\|\g(t)\|_{\L^2}^2\|f_1-f_2\|_{\mathrm{L}^2(0,T)}^2\bigg) \exp \left(\frac{2}{\mu}\int_0^T\|\u_2(t)\|_{\V}^2\d t \right)\no\\&\qquad\times\bigg[T \bigg\{ \|\Delta \boldsymbol{\omega}\|_{\H}+\|\boldsymbol{\omega}\|_{\H}+\|\nabla \boldsymbol{\omega}\|_{\widetilde{\L}^{\infty}}\bigg(\sup_{t\in[0,T]}\|\u_1(t)\|_{\H}+\sup_{t\in[0,T]}\|\u_2(t)\|_{\H}\bigg)\bigg\}^2\no\\&\qquad+ T^{3-r}\|\boldsymbol{\omega}\|_{\H^2}^2 \bigg\{\sup_{t\in[0,T]} \| \u_1(t)\|_{\H}^{2}\bigg(\int_0^T \| \u_1(t)\|_{\V}^2 \d t\bigg)^{r-2}\no\\&\qquad+\sup_{t\in[0,T]}\| \u_2(t)\|_{\H}^{2}\bigg(\int_0^T \| \u_2(t)\|_{\V}^2 \d t\bigg)^{r-2}\bigg\}\bigg]. 
		\end{align}
		The relation \eqref{3k} simplifies to
		\begin{align}\label{3l}
			\left\|\mathcal{A}f_1-\mathcal{A}f_2\right\|_{\mathrm{L}^2(0,T)}^2 \leq m_3 \|f_1-f_2\|_{\mathrm{L}^2(0,T)}^2,
		\end{align}
		where
		\begin{align*}
			m_3&= \frac{CT^{3-r}}{g_0^2} \sup_{t\in[0,T]}\|\g(t)\|_{\L^2}^2 \exp \left(\int_0^T\|\u_2(t)\|_{\V}^2\d t \right)
			\no\\&\quad\times\bigg[T^{r-2} \bigg\{ \|\Delta \boldsymbol{\omega}\|_{\H}+\|\boldsymbol{\omega}\|_{\H}+\|\nabla \boldsymbol{\omega}\|_{\widetilde{\L}^{\infty}}\bigg(\sup_{t\in[0,T]}\|\u_1(t)\|_{\H}+\sup_{t\in[0,T]}\|\u_2(t)\|_{\H}\bigg)\bigg\}^2\no\\&\quad+ \|\boldsymbol{\omega}\|_{\H^2}^2 \bigg\{\sup_{t\in[0,T]} \| \u_1(t)\|_{\H}^{2}\bigg(\int_0^T \| \u_1(t)\|_{\V}^2 \d t\bigg)^{r-2}\no\\&\quad+\sup_{t\in[0,T]}\| \u_2(t)\|_{\H}^{2}\bigg(\int_0^T \| \u_2(t)\|_{\V}^2 \d t\bigg)^{r-2}\bigg\}\bigg].
		\end{align*}
		Using the energy estimates \eqref{E1} and \eqref{E2}, the quantity $m_3$ can be bounded by
		\begin{align}\label{3m}
			m_3 &\leq \frac{CT^{3-r}}{g_0^2}\sup_{t\in[0,T]}\|\g(t)\|_{\L^2}^2\exp \bigg\{\|\u_0\|_{\H}^2+\frac{1}{\alpha}\sup_{t\in[0,T]}\|\g(t)\|_{\L^2}^2\|f_2\|_{\mathrm{L}^2(0,T)}^2\bigg\} \no\\&\quad \times\bigg[T^{r-2}\bigg\{\|\nabla \boldsymbol{\omega}\|_{\widetilde{\L}^{\infty}} \bigg(2 \|\u_0\|_{\H}+T^{1/2}\sup_{t\in[0,T]}\|\g(t)\|_{\L^2}\no\\&\quad \times\left(\|f_1\|_{\mathrm{L}^2(0,T)}+\|f_2\|_{\mathrm{L}^2(0,T)}\right)\bigg)+ \|\boldsymbol{\omega}\|_{\H} + \|\Delta \boldsymbol{\omega}\|_{\H}\bigg\}^2\no\\&\quad+ \|\boldsymbol{\omega}\|_{\H^2}^2\bigg\{2 \|\u_0\|_{\H}^2+\frac{1}{\alpha}\sup_{t\in[0,T]}\|\g(t)\|_{\L^2}^2\left(\|f_1\|_{\mathrm{L}^2(0,T)}^2+\|f_2\|_{\mathrm{L}^2(0,T)}^2\right)\bigg\}^{r-1}\bigg]   .
		\end{align}
		Since the functions $f_1$ and $f_2$ lie within the ball $\mathcal{D}$, a combination of the inequalities \eqref{3l} and \eqref{3m} gives the estimate
		\begin{align}\label{3n}
			\left\|\mathcal{A}f_1-\mathcal{A}f_2\right\|_{\mathrm{L}^2(0,T)} \leq \left(m_4 T^{3-r} \right)^{1/2} \|f_1-f_2\|_{\mathrm{L}^2(0,T)},
		\end{align}
		where
		\begin{align*}
			m_4&= \frac{C}{g_0^2}\sup_{t\in[0,T]}\|\g(t)\|_{\L^2}^2\exp \bigg\{\|\u_0\|_{\H}^2+\widetilde{a}^2\sup_{t\in[0,T]}\|\g(t)\|_{\L^2}^2\bigg\}\\&\quad\times\bigg[T^{r-2}\bigg\{ \|\Delta \boldsymbol{\omega}\|_{\H}+ \|\boldsymbol{\omega}\|_{\H} +\|\nabla \boldsymbol{\omega}\|_{\widetilde{\L}^{\infty}} \bigg(\|\u_0\|_{\H}+T^{1/2}\widetilde{a}\sup_{t\in[0,T]}\|\g(t)\|_{\L^2}\bigg)\bigg\}^2\\&\quad+ \|\boldsymbol{\omega}\|_{\H^2}^2\bigg( \|\u_0\|_{\H}^2+\widetilde{a}^2\sup_{t\in[0,T]}\|\g(t)\|_{\L^2}^2\bigg)^{r-1}\bigg] \ \text{ and }\\
			\widetilde{a}&=a+\|\varphi'/g_1\|_{\mathrm{L}^2(0,T)},
		\end{align*}
		and $a$ is the radius of the ball $\mathcal{D}$.
		Note that $m_4$ is expressed only in terms of the input data.
		
		In accordance with the assumption made, the operator $\mathcal{A}$ maps the ball $\mathcal{D}$ into itself which makes it possible to define for any positive integer $k$, the $k^{\mathrm{th}}$ degree of the operator $\mathcal{A}$. Let this operator be denoted by the symbol $\mathcal{A}^k$. Using the standard technique of mathematical induction on $k$ the inequality \eqref{3n} validates the estimate
		\begin{align}\label{3o}
			\left\|\mathcal{A}^kf_1-\mathcal{A}^kf_2\right\|_{\mathrm{L}^2(0,T)} \leq \bigg(\frac{m_4^k T^{(3-r)k}}{k!}\bigg)^{1/2} \|f_1-f_2\|_{\mathrm{L}^2(0,T)}.
		\end{align}
		It is visible that
		\begin{align*}
			\big(m_4^k T^{(3-r)k}/k!\big)^{1/2} \rightarrow 0  \ \ \text{ as } \ \ k \rightarrow \infty.
		\end{align*}
		This implies that there exists a positive integer $k_0$ such that
		\begin{align*}
			\big(m_4^{k_0}T^{(3-r)k_{0}} /k_{0}!\big)^{1/2} <1.
		\end{align*}
		Due to estimate \eqref{3o}, one can conclude that the operator $\mathcal{A}^{k_0}$ is a contracting mapping in the ball $\mathcal{D}$. 
		\vskip 0.2 cm
		\noindent\textbf{Case II:} \emph{ $r\in [1,2]$.} Applying H\"older's inequality in \eqref{3j} gives
		\begin{align}\label{3o1}
			&\int_0^T\left\||\u_1|^{r-1}\u_1-|\u_2|^{r-1}\u_2\right\|_{\L^1}^2 \d t
			\no\\&\quad\leq C \sup_{t\in[0,T]}\|(\u_1-\u_2)(t)\|_{\H}^2\int_0^T\left(|\Omega|^{2-r} \| \u_1(t)\|_{\H}^{2(r-1)}+ |\Omega|^{2-r} \| \u_2(t)\|_{\H}^{2(r-1)}\right)\d t
			\no\\&\quad\leq C T \sup_{t\in[0,T]}\|(\u_1-\u_2)(t)\|_{\H}^2 \bigg(\sup_{t\in[0,T]}\| \u_1(t)\|_{\H}^{2(r-1)}+\sup_{t\in[0,T]}\| \u_2(t)\|_{\H}^{2(r-1)}\bigg),
		\end{align}
		where $|\Omega|$ is the Lebesgue measure of $\Omega$. Substitution of  the relations \eqref{E61} and \eqref{3o1} in \eqref{3i} results to 
		\begin{align*}
			&\left\|\mathcal{A}f_1-\mathcal{A}f_2\right\|_{\mathrm{L}^2(0,T)}^2
			\\&\leq \frac{CT}{g_0^2} \bigg(\sup_{t\in[0,T]}\|\g(t)\|_{\L^2}^2\|f_1-f_2\|_{\mathrm{L}^2(0,T)}^2\bigg) \exp \left(\frac{2}{\mu}\int_0^T\|\u_2(t)\|_{\V}^2\d t \right)\\&\quad \times \bigg\{ \|\Delta \boldsymbol{\omega}\|_{\H}+\|\boldsymbol{\omega}\|_{\H}+\|\nabla \boldsymbol{\omega}\|_{\widetilde{\L}^{\infty}}\bigg(\sup_{t\in[0,T]}\|\u_1(t)\|_{\H}+\sup_{t\in[0,T]}\|\u_2(t)\|_{\H}\bigg) \\&\quad+ \|\boldsymbol{\omega}\|_{\H^2} \bigg(\sup_{t\in[0,T]}\| \u_1(t)\|_{\H}+\sup_{t\in[0,T]}\| \u_2(t)\|_{\H}\bigg)^{r-1}\bigg\}^2.
		\end{align*}
		From the above estimate, it follows that
		\begin{align}\label{3p}
			\left\|\mathcal{A}f_1-\mathcal{A}f_2\right\|_{\mathrm{L}^2(0,T)}^2 \leq m_5 \|f_1-f_2\|_{\mathrm{L}^2(0,T)}^2,
		\end{align}
		where
		\begin{align*}
			m_5&= \frac{CT}{g_0^2} \sup_{t\in[0,T]}\|\g(t)\|_{\L^2}^2 \exp \left(\int_0^T\|\u_2(t)\|_{\V}^2\d t \right)\\&\quad\times \bigg\{ \|\Delta \boldsymbol{\omega}\|_{\H}+\|\boldsymbol{\omega}\|_{\H}+\|\nabla \boldsymbol{\omega}\|_{\widetilde{\L}^{\infty}}\bigg(\sup_{t\in[0,T]}\|\u_1(t)\|_{\H}+\sup_{t\in[0,T]}\|\u_2(t)\|_{\H}\bigg)	
			\\&\quad+\|\boldsymbol{\omega}\|_{\H^2} \bigg(\sup_{t\in[0,T]}\| \u_1(t)\|_{\H}+\sup_{t\in[0,T]}\| \u_2(t)\|_{\H}\bigg)^{r-1}\bigg\}^2. 
		\end{align*}
		Using the estimates \eqref{E1} and \eqref{E2}, the quantity $m_5$ can be bounded as
		\begin{align}\label{3q1}
			m_5 &\leq \frac{CT}{g_0^2} \sup_{t\in[0,T]}\|\g(t)\|_{\L^2}^2\exp \bigg\{\|\u_0\|_{\H}^2+\frac{1}{\alpha}\sup_{t\in[0,T]}\|\g(t)\|_{\L^2}^2\|f_2\|_{\mathrm{L}^2(0,T)}^2\bigg\}\bigg[ \|\Delta \boldsymbol{\omega}\|_{\H}+ \|\boldsymbol{\omega}\|_{\H} \no\\&\quad+\|\nabla \boldsymbol{\omega}\|_{\widetilde{\L}^{\infty}} \bigg(2 \|\u_0\|_{\H}+T^{1/2}\sup_{t\in[0,T]}\|\g(t)\|_{\L^2}\big(\|f_1\|_{\mathrm{L}^2(0,T)}+\|f_2\|_{\mathrm{L}^2(0,T)}\big)\bigg)\no\\&\quad +\|\boldsymbol{\omega}\|_{\H^2}\bigg(2 \|\u_0\|_{\H}+T^{1/2}\sup_{t\in[0,T]}\|\g(t)\|_{\L^2}\big(\|f_1\|_{\mathrm{L}^2(0,T)}+\|f_2\|_{\mathrm{L}^2(0,T)}\big)\bigg)^{r-1}\bigg]^2.
		\end{align}
		A combination of the inequalities \eqref{3p} and \eqref{3q1} gives the following estimate:
		\begin{align}\label{3r}
			\left\|\mathcal{A}f_1-\mathcal{A}f_2\right\|_{\mathrm{L}^2(0,T)} \leq (m_6T)^{1/2} \|f_1-f_2\|_{\mathrm{L}^2(0,T)},
		\end{align}
		where
		\begin{align*}
			m_6&= \frac{C}{g_0^2}\sup_{t\in[0,T]}\|\g(t)\|_{\L^2}^2\exp \bigg\{\|\u_0\|_{\H}^2+\widetilde{a}^2\sup_{t\in[0,T]}\|\g(t)\|_{\L^2}^2\bigg\}\no\\&\quad\times\bigg[ \|\Delta \boldsymbol{\omega}\|_{\H}+\|\boldsymbol{\omega}\|_{\H}+\|\nabla \boldsymbol{\omega}\|_{\L^{\infty}} \bigg( \|\u_0\|_{\H}+T^{1/2}\widetilde{a}\sup_{t\in[0,T]}\|\g(t)\|_{\L^2}\bigg) \\&\quad+ \|\boldsymbol{\omega}\|_{\H^2}\bigg( \|\u_0\|_{\H}+T^{1/2}\widetilde{a}\sup_{t\in[0,T]}\|\g(t)\|_{\L^2}\bigg)^{r-1}\bigg]^2\ \text{ and }\\ 
			\widetilde{a}&=a+\|\varphi'/g_1\|_{\mathrm{L}^2(0,T)},
		\end{align*}
		and $a$ is the radius of the ball $\mathcal{D}$.
		The quantity $m_6$ is also expressed in terms of the  input data only.\
		
		Using the same arguments  those made in the case for $r\in(2,3]$, we can define the operator $\mathcal{A}^k$. Using the standard technique of mathematical induction on $k,$ the  inequality \eqref{3r} validates the estimate
		\begin{align}\label{3s}
			\left\|\mathcal{A}^kf_1-\mathcal{A}^kf_2\right\|_{\mathrm{L}^2(0,T)} \leq \bigg(\frac{m_6^kT^k}{k!}\bigg)^{1/2} \|f_1-f_2\|_{\mathrm{L}^2(0,T)}.
		\end{align}
		It is clear that
		\begin{align*}
			\big(m_6^kT^k/k!\big)^{1/2} \rightarrow 0  \ \ \text{ as } \ \ k \rightarrow \infty.
		\end{align*}
		Therefore, there exists a positive integer $\tilde k_0$ such that
		\begin{align*}
			\big(m_6^{\tilde k_0}T^{\tilde k_0} /\tilde k_{0}!\big)^{1/2} <1.
		\end{align*}
		From the estimate \eqref{3s}, it is immediate that the operator $\mathcal{A}^{\tilde k_0}$ is a contracting mapping in the ball $\mathcal{D}$. 
	\end{proof}
	Let us now prove  the existence of a solution $(\u,p,f)$ to the inverse problem \eqref{1a}-\eqref{1e}. 
	\begin{proof}[Proof of Theorem \ref{thm2} (i)-Existence]
Since	the operator $\mathcal{A}^k$ is a contraction mapping in the ball $\mathcal{D}$, an application of the extension of the contraction mapping theorem (Theorem \ref{thm3.1}) yields the operator $\mathcal{A}$ has a unique fixed point. Hence, the inverse problem \eqref{1a}-\eqref{1e} has a solution, which completes the proof. 
	\end{proof}
	\subsection{Uniqueness} Let us now prove the uniqueness of the solution obtained in Theorem \ref{thm2} (i). 
	\begin{proof}[Proof of Theorem \ref{thm2} (i)-Uniqueness]
	Assume on the contrary that there are two distinct solutions, $(\u_1,p_1,f_1)$ and  $(\u_2,p_2,f_2)$ of the inverse problem \eqref{1a}-\eqref{1e} such that both $f_1$ and $f_2$ lie within the ball $\mathcal{D}$.
	
	We wish to prove that $f_1$ does not coincide with $f_2$ a.e. on $[0,T]$. Indeed, if $f_1$ is equal to $f_2$ a.e. on $[0,T]$, then $(\u_1,p_1)$ will coincide with $(\u_2,p_2)$ a.e. in $\Omega \times [0,T],$ which is in accordance with the uniqueness theorem for the direct problem.
	
	We begin by analyzing the first triplet $(\u_1,p_1,f_1)$. Using similar arguments as in the proof of Theorem \ref{thm1}, we can conclude that the function $f_1$ represents a solution to the equation \eqref{1h}.  Similar arguments also prove that the function $f_2$ also solves the same equation \eqref{1h}. But we have just proved the uniqueness of the solution to the equation \eqref{1h} in $\mathcal{D}$. Thus, we have arrived at the conclusion that the assumption about the existence of two distinct solutions $(\u_i,p_i,f_i) \ (i=1,2)$ does not hold. Hence, the inverse problem \eqref{1a}-\eqref{1e} has a unique solution, which completes the proof.
	\end{proof} 
We will now give an example (motivated from \cite{IAV}) to show that the class of functions satisfying the conditions of part (i) of the Theorem \ref{thm2} is not empty.
\begin{example}
		Let $\Omega \subset \mathbb{R}^2, \ \u_0 \in \H, \ \g=\g(x), \ \g \in \L^2(\Omega), \ \boldsymbol{\omega} \in \H^2(\Omega) \cap \V, \ \nabla \boldsymbol{\omega} \in \widetilde{\L}^{\infty}, \ \varphi \in \mathrm{H}^1(0,T)$ and $$\left| \int_\Omega\g(x) \cdot \boldsymbol{\omega}(x) \d x\right| = g_0 >0.$$
		If we consider
		\begin{align*}
			\int_\Omega\g(x) \cdot \boldsymbol{\omega}(x) \d x\equiv \varphi(t),
		\end{align*}
	that is, $\varphi \equiv \text{constant}$, and if we set $a=1$, it is clear to observe that $\widetilde{a}=1$, since $\widetilde{a}=a+\|\varphi'/g_1\|_{\mathrm{L}^2(0,T)}$. 
	\vskip 0.2 cm
	\noindent\textbf{Case I:} \emph{ $r\in (2,3]$.} The inequality \eqref{3a} has the following form:
		\begin{align}\label{ex}
		&\frac{C}{g_0} \bigg[T\bigg\{
		\big(\|\Delta \boldsymbol{\omega}\|_{\H}+\|\boldsymbol{\omega}\|_{\H}\big)\bigg(\|\u_0\|_{\H}+T^{1/2}\sup_{t\in[0,T]}\|\g(t)\|_{\L^2}\bigg)\nonumber\\&\qquad+\|\nabla \boldsymbol{\omega}\|_{\widetilde{\L}^{\infty}}\bigg(\|\u_0\|_{\H}^2+\sup_{t\in[0,T]}\|\g(t)\|_{\L^2}^2\bigg)\bigg\}^2\nonumber\\&\qquad+ T^\frac{3-r}{r-1}\|\boldsymbol{\omega}\|_{\H^2}^2\bigg(\|\u_0\|_{\H}+T^{1/2}\sup_{t\in[0,T]}\|\g(t)\|_{\L^2}\bigg)^\frac{4}{(r-1)}
		\no\\&\qquad \times \bigg(\|\u_0\|_{\H}^2+\sup_{t\in[0,T]}\|\g(t)\|_{\L^2}^2\bigg)^\frac{2(r-2)}{(r-1)}\bigg]^\frac{1}{2}<1.
	\end{align}
Clearly, as $T \to 0^+$, the left-hand side of \eqref{ex} approaches zero for $r\in(2,3)$. As a result, there exists a time $T'>0$ such that for any $T \in (0,T'],$ the  estimate \eqref{ex} holds true. Based on this reasoning, it appears reasonable to investigate the inverse problem \eqref{1a}-\eqref{1e}, with $T \in (0, T'] $ and $a=1$, the radius of the ball $\mathcal{D}$. 

For $r=3$, the condition becomes $\frac{C}{g_0}\|\boldsymbol{\omega}\|_{\H^2}\|\u_0\|_{\H}\left(\|\u_0\|_{\H}^2+\|\g(0)\|_{\L^2}^2\right)^{\frac{1}{2}}<1$. Under this smallness assumption on the data, one can obtain the existence and uniqueness of the inverse problem \eqref{1a}-\eqref{1e} for any $T \in (0,T']$. 
\vskip 0.2 cm
\noindent\textbf{Case II:} \emph{ $r\in [1,2]$.} The inequality \eqref{3b} has the following form:
\begin{align}\label{ex1}
	&\frac{CT^{1/2}}{g_0} \bigg\{
	\big(\|\Delta \boldsymbol{\omega}\|_{\H}+\|\boldsymbol{\omega}\|_{\H}\big)\bigg(\|\u_0\|_{\H}+T^{1/2}\sup_{t\in[0,T]}\|\g(t)\|_{\L^2}\bigg)\no\\&\quad+\|\nabla \boldsymbol{\omega}\|_{\widetilde{\L}^{\infty}}\bigg(\|\u_0\|_{\H}^2+ \sup_{t\in[0,T]}\|\g(t)\|_{\L^2}^2\bigg)
	\no\\&\quad +\boldsymbol{\omega}\|_{\H^2}\bigg(\|\u_0\|_{\H}+T^{1/2}\sup_{t\in[0,T]}\|\g(t)\|_{\L^2}\bigg)^r\bigg\}<1.
\end{align}
Clearly, as $T \to 0^+$, the left-hand side of \eqref{ex1} approaches zero. Thus, there exists a time $T'>0$ such that for any $T \in (0,T'],$ the estimate \eqref{ex1} holds true. Based on this reasoning, it seems acceptable to investigate the inverse problem \eqref{1a}-\eqref{1e}, with $T \in (0, T'] $ and $a=1$, the radius of the ball $\mathcal{D}$.

Hence, for $r \in [1,3]$, it is apparent that the inverse problem with these input data satisfies the conditions of part (i) of Theorem \ref{thm2}.
\end{example}
	\subsection{Stability}
	%	We have proved the existence and uniqueness of the solution $\{\u,f\}$ to the inverse problem \eqref{1a}-\eqref{1e} in the previous subsection.
	In order to get a result on the stability, we first provide some supporting Lemmas. Let $(\u_i,p_i,f_i)$ $(i=1,2)$ be the solutions of inverse problem \eqref{1a}-\eqref{1e} corresponding to the given data $(\u_{0i},\varphi_i,\g_i)$ $(i=1,2)$ and set 
	\begin{align*}
		\u&:=\u_1-\u_2, \ \ \ \ \  \ p:=p_1-p_2, \  \ \ \ \ f:=f_1-f_2,\\ \u_0&:=\u_{01}-\u_{02},  \ \ \ \varphi:=\varphi_1-\varphi_2, \ \ \  \ \  \g:=\g_1-\g_2.
	\end{align*}
	The following lemma establishes the stability of the velocity field  $\u(\cdot)$ of the solution of the inverse problem.
	\begin{lemma}\label{lemma11}
		Let $\u_{0i} \in \H, \ \g_i \in \C( [0,T];\L^2(\Omega))$ and $f_i \in \mathrm{L}^2(0,T)$, for $i=1,2$. Then, the following estimate holds:
		\begin{align}\label{S1}
			&\sup_{t\in[0,T]}\|\u(t)\|_{\H}^2+\mu \int_0^T\|\u(t)\|_{\V}^2\d t +\frac{\beta}{2^{r-2}}\int_0^T\|\u(t)\|_{\widetilde{\L}^{r+1}}^{r+1}\d t \nonumber\\&\leq C \bigg(\|\u_0\|_{\H}^2+ \|f\|_{\mathrm{L}^2(0,T)}^2+\sup_{t\in[0,T]}\|\g(t)\|_{\L^2}^2\bigg),
		\end{align}
	where $C$ depends on the input data, $\mu,\alpha,\beta,r,T$ and $\Omega$. 
	\end{lemma}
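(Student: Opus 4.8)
The plan is to derive an energy inequality for the difference $\u=\u_1-\u_2$ and close it by Grönwall's lemma, with all constants depending only on the prescribed data. Subtracting the identity \eqref{1f} written for $(\u_2,p_2,f_2)$ from the one for $(\u_1,p_1,f_1)$ and using the algebraic decompositions $(\u_1\cdot\nabla)\u_1-(\u_2\cdot\nabla)\u_2=(\u_1\cdot\nabla)\u+(\u\cdot\nabla)\u_2$ and $f_1\g_1-f_2\g_2=f_1\g+f\g_2$, the difference $\u$ is seen to be a weak solution of a CBF-type system whose forcing is $f_1\g+f\g_2$ and whose Forchheimer term is $\beta\big(|\u_1|^{r-1}\u_1-|\u_2|^{r-1}\u_2\big)$. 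Testing this equation with $\u(t)$ --- legitimate after the usual Galerkin/regularity argument for 2D weak solutions --- the term $\int_\Omega(\u_1\cdot\nabla)\u\cdot\u\,\d x$ vanishes because $\nabla\cdot\u_1=0$ and $\u\in\V$, the pressure term vanishes as well, and the standard accretivity estimate for the Forchheimer nonlinearity gives $\beta\int_\Omega\big(|\u_1|^{r-1}\u_1-|\u_2|^{r-1}\u_2\big)\cdot\u\,\d x\ge\frac{\beta}{2^{r-2}}\|\u\|_{\widetilde{\L}^{r+1}}^{r+1}$. This yields
\[\frac12\frac{\d}{\d t}\|\u\|_{\H}^2+\mu\|\u\|_{\V}^2+\alpha\|\u\|_{\H}^2+\frac{\beta}{2^{r-2}}\|\u\|_{\widetilde{\L}^{r+1}}^{r+1}\le\left|\int_\Omega(\u\cdot\nabla)\u_2\cdot\u\,\d x\right|+\left|(f_1\g,\u)\right|+\left|(f\g_2,\u)\right|.\]

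Next I would estimate the right-hand side. For the trilinear term, the 2D Ladyzhenskaya inequality $\|\u\|_{\L^4}^2\le2^{1/2}\|\u\|_{\H}\|\u\|_{\V}$ and Young's inequality give $\left|\int_\Omega(\u\cdot\nabla)\u_2\cdot\u\,\d x\right|\le\|\u\|_{\L^4}^2\|\u_2\|_{\V}\le\frac{\mu}{2}\|\u\|_{\V}^2+\frac{C}{\mu}\|\u_2\|_{\V}^2\|\u\|_{\H}^2$, while for the two forcing terms Cauchy--Schwarz and Young give $\left|(f_1\g,\u)\right|+\left|(f\g_2,\u)\right|\le\|\u\|_{\H}^2+\frac12|f_1|^2\|\g\|_{\L^2}^2+\frac12|f|^2\|\g_2\|_{\L^2}^2$. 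Absorbing $\frac{\mu}{2}\|\u\|_{\V}^2$ on the left, multiplying by $2$, and applying Grönwall's inequality on $[0,t]$ with integrating factor $\exp\big(2t+\frac{2C}{\mu}\int_0^t\|\u_2(s)\|_{\V}^2\,\d s\big)$, then taking the supremum over $t\in[0,T]$ and integrating the dissipative terms, gives
\[\sup_{t\in[0,T]}\|\u(t)\|_{\H}^2+\mu\int_0^T\|\u\|_{\V}^2\,\d t+\frac{\beta}{2^{r-2}}\int_0^T\|\u\|_{\widetilde{\L}^{r+1}}^{r+1}\,\d t\le C\left(\|\u_0\|_{\H}^2+\int_0^T\big(|f_1|^2\|\g\|_{\L^2}^2+|f|^2\|\g_2\|_{\L^2}^2\big)\,\d t\right).\]
Finally, $\int_0^T|f_1|^2\|\g\|_{\L^2}^2\,\d t\le\|f_1\|_{\mathrm{L}^2(0,T)}^2\sup_{t\in[0,T]}\|\g(t)\|_{\L^2}^2$ and $\int_0^T|f|^2\|\g_2\|_{\L^2}^2\,\d t\le\|f\|_{\mathrm{L}^2(0,T)}^2\sup_{t\in[0,T]}\|\g_2(t)\|_{\L^2}^2$; absorbing $\|f_1\|_{\mathrm{L}^2(0,T)}$, $\sup_{t}\|\g_2(t)\|_{\L^2}$ and the Grönwall exponential $\exp\big(2T+\frac{2C}{\mu}\int_0^T\|\u_2\|_{\V}^2\,\d t\big)$ into $C$ delivers \eqref{S1}.

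The step requiring the most care is checking that $C$ depends on the input data only. This rests on the a priori estimates \eqref{E1}--\eqref{E2}, which make $\int_0^T\|\u_2(t)\|_{\V}^2\,\d t$ finite and bound it in terms of $\|\u_{02}\|_{\H}$, $\sup_t\|\g_2(t)\|_{\L^2}$ and $\|f_2\|_{\mathrm{L}^2(0,T)}$, together with the fact that $f_1,f_2$ lie in the ball $\mathcal{D}$, so that $\|f_1\|_{\mathrm{L}^2(0,T)},\|f_2\|_{\mathrm{L}^2(0,T)}\le\widetilde{a}$. The only other technical point --- justifying the use of $\u(t)$ as a test function in the difference equation --- is handled exactly as for the direct problem, and the remaining manipulations are routine.
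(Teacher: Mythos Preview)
Your proof is correct and follows essentially the same route as the paper's: subtract the two equations, test with $\u$, use the accretivity of the Forchheimer term together with the 2D Ladyzhenskaya bound on the trilinear term, and close by Gr\"onwall with the exponential controlled via \eqref{E2}. The only cosmetic differences are your choice of the splitting $f_1\g_1-f_2\g_2=f_1\g+f\g_2$ (the paper uses $f\g_1+f_2\g$) and your absorption of $\|\u\|_{\H}^2$ into the Gr\"onwall factor rather than into the $\alpha\|\u\|_{\H}^2$ term; note also that the pointwise accretivity bound is $\ge\frac{\beta}{2^{r-1}}\|\u\|_{\widetilde{\L}^{r+1}}^{r+1}$ (not $2^{r-2}$), which after multiplying the differential inequality by $2$ gives exactly the coefficient in \eqref{S1}.
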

	\begin{proof}
		Subtracting the equations \eqref{1a} for $\{\u_i,p_i, f_i\} \ (i=1,2)$, we deduce that
		\begin{align}\label{S2}
			&\u_t(t)-\mu \Delta \u(t)+(\u_1(t) \cdot \nabla)\u(t)\nonumber+(\u (t)\cdot \nabla)\u_2(t)+\alpha \u(t)\\&\quad+\beta \left(|\u_1(t)|^{r-1}\u_1(t)-|\u_2(t)|^{r-1}\u_2(t)\right)+\nabla p(t)=f (t)\g_1(t)+f_2 (t)\g(t),
		\end{align}
	for a.e. $t\in[0,T]$ in $\V'$. Taking the inner product with $\u(\cdot)$ in the equation \eqref{S2}, we obtain
		\begin{align}\label{S3}
			&\frac{1}{2}\frac{\d}{\d t}\|\u(t)\|_{\H}^2+\mu \|\u(t)\|_{\V}^2+\alpha\|\u(t)\|_{\H}^2\nonumber\\&=(f(t) \g_1(t)+f_2(t) \g(t),\u(t)) -\langle(\u(t) \cdot \nabla)\u_2(t),\u(t)\rangle\nonumber\\&\quad-\beta \langle|\u_1(t)|^{r-1}\u_1(t)-|\u_2(t)|^{r-1}\u_2(t),\u(t)\rangle.
		\end{align}
		Using H\"older's and Young's inequalities, we estimate $|(f \g_1+f_2 \g,\u)|$ as
		\begin{align}\label{S4}
			|(f \g_1+f_2 \g,\u)|&\leq \left(|f|\|\g_1\|_{\L^2} +|f_2| \|\g\|_{\L^2}\right)\|\u\|_{\H}\nonumber\\&\leq\frac{1}{ \alpha}\left(|f|^2\|\g_1\|_{\L^2}^2+|f_2|^2\|\g\|_{\L^2}^2\right)+\frac{\alpha}{2}\|\u\|_{\H}^2.
		\end{align}
		Making use of H\"older’s, Ladyzhenskaya's, and Young’s inequalities, we have
		\begin{align}\label{S5}
			|((\u \cdot \nabla)\u_2,\u)|&\leq\|\u\|_{\widetilde{\L}^{4}}^2\|\nabla\u_2\|_{\H}\no\\&\leq\sqrt{2}\|\u\|_{\H}\|\u\|_{\V}\|\u_2\|_{\V}\leq\frac{\mu}{2}\|\u\|_{\V}^2+\frac{1}{\mu}\|\u\|_{\H}^2\|\u_2\|_{\V}^2.
		\end{align}
		An estimate similar to \eqref{E11} gives
		\begin{align}\label{S6}
			\beta(|\u_1|^{r-1}\u_1-|\u_2|^{r-1}\u_2,\u)& \geq \frac{\beta}{2}\big\||\u_1|^{\frac{r-1}{2}}\u\big\|_{\H}^2+\frac{\beta}{2}\big\||\u_2|^{\frac{r-1}{2}}\u\big\|_{\H}^2 \no\\&\geq \frac{\beta}{2^{r-1}}\|\u\|_{\widetilde{\L}^{r+1}}^{r+1}.
		\end{align}
		Substituting \eqref{S4}-\eqref{S6} in \eqref{S3}, and then integrating from $0$ to $t$, we obtain
		\begin{align}\label{S7}
			&\|\u(t)\|_{\H}^2+\mu \int_0^t\|\u(s)\|_{\V}^2\d  s+\alpha\int_0^t\|\u(s)\|_{\H}^2\d s +\frac{\beta}{2^{r-2}}\int_0^t\|\u(s)\|_{\widetilde{\L}^{r+1}}^{r+1}\d s \nonumber\\&\leq \|\u_0\|_{\H}^2+\frac{2}{\mu}\int_0^t\|\u_2(s)\|_{\V}^2 \|\u(s)\|_{\H}^2 \d s\nonumber \\&\quad + \frac{2}{\alpha}\int_0^t\left(|f(s)|^2\|\g_1(s)\|_{\L^2}^2+|f_2(s)|^2\|\g(s)\|_{\L^2}^2\right)\d s,
		\end{align} 
		for all $t \in [0,T]$.	Applying Gronwall's inequality in \eqref{S7}, it can be seen that
		\begin{align*}
			\|\u(t)\|_{\H}^2&\leq \bigg\{\|\u_0\|_{\H}^2+ \frac{2}{\alpha}\bigg(\sup_{t\in[0,T]}\|\g_1(t)\|_{\L^2}^2\|f\|_{\mathrm{L}^2(0,T)}^2+\sup_{t\in[0,T]}\|\g(t)\|_{\L^2}^2\|f_2\|_{\mathrm{L}^2(0,T)}^2\bigg)\bigg\}\\&\quad \times\exp \left(\frac{2}{\mu}\int_0^T\|\u_2(t)\|_{\V}^2\d t\right),
		\end{align*}
		for all $t \in [0,T]$. Thus, from \eqref{S7}, it is immediate that
		\begin{align*}
			&\sup_{t\in[0,T]}\|\u(t)\|_{\H}^2+\mu \int_0^T\|\u(t)\|_{\V}^2\d t +\frac{\beta}{2^{r-2}}\int_0^T\|\u(t)\|_{\widetilde{\L}^{r+1}}^{r+1}\d t \\& \leq \bigg\{\|\u_0\|_{\H}^2+ \frac{2}{\alpha}\bigg(\sup_{t\in[0,T]}\|\g_1(t)\|_{\L^2}^2\|f\|_{\mathrm{L}^2(0,T)}^2+\sup_{t\in[0,T]}\|\g(t)\|_{\L^2}^2\|f_2\|_{\mathrm{L}^2(0,T)}^2\bigg)\bigg\}\\&\quad \times \exp \left(\frac{2}{\mu}\int_0^T\|\u_2(t)\|_{\V}^2\d t\right) \\&\leq\bigg\{\|\u_0\|_{\H}^2+ \frac{2}{\alpha}\bigg(\sup_{t\in[0,T]}\|\g_1(t)\|_{\L^2}^2\|f\|_{\mathrm{L}^2(0,T)}^2+\sup_{t\in[0,T]}\|\g(t)\|_{\L^2}^2\|f_2\|_{\mathrm{L}^2(0,T)}^2\bigg)\bigg\}\\& \quad\times 
			\exp\bigg\{\frac{1}{\mu^2}\bigg(\|\u_{02}\|_{\H}^2+\frac{1}{\alpha}\sup_{t\in[0,T]}\|\g_2(t)\|_{\L^2}^2\|f_2\|_{\mathrm{L}^2(0,T)}^2\bigg)\bigg\}\\&
				\leq C\big(\mu,\alpha, \beta,r,\|\u_{02}\|_{\H},\|f_2\|_{\L^2},\|\g_2\|_{0},T\big)\bigg(\|\u_0\|_{\H}^2+ \|f\|_{\mathrm{L}^2(0,T)}^2+\sup_{t\in[0,T]}\|\g(t)\|_{\L^2}^2\bigg),
		\end{align*} 
		and \eqref{S1} follows.
	\end{proof}
	The next lemma establishes the stability of the scalar function $f$ of the solution of the inverse problem.
	\begin{lemma}\label{lemma2}
		Let $\u_{0i} \in \H, \ \boldsymbol{\omega} \in \H^2(\Omega) \cap \V, \ \nabla \boldsymbol{\omega} \in \widetilde{\L}^\infty$, $\g_i \in \C( [0,T];\L^2(\Omega)),\ \varphi_i\in \mathrm{H}^1(0,T)$, and $f_i \in \mathrm{L}^2(0,T)$, for $i=1,2$. Then, the following estimate holds:
		\begin{align}\label{S8}
			\|f\|_{\mathrm{L}^2(0,T)}\leq C\bigg(\|\u_0\|_{\H}+\|\varphi\|_{\mathrm{H}^1(0,T)}+\sup_{t\in[0,T]}\|\g(t)\|_{\L^2}\bigg),
		\end{align}
		where $C$ depends on the input data, $\mu,\alpha,\beta,r,T$ and $\Omega$.
	\end{lemma}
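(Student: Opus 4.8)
\emph{Overall approach.} The plan is to derive a pointwise-in-time bound for $|f(t)|=|f_1(t)-f_2(t)|$ by subtracting the scalar identities that $f_1$ and $f_2$ satisfy, then pass to the $\mathrm{L}^2(0,T)$ norm, and finally close the resulting \emph{implicit} inequality using Lemma \ref{lemma11}. First I would record the scalar equation obeyed by each $f_i$: arguing exactly as in the proof of Theorem \ref{thm1}(i), testing \eqref{1a} for $(\u_i,p_i,f_i)$ with $\boldsymbol{\omega}$ and using the overdetermination condition \eqref{1e} (so that $\frac{\d}{\d t}\int_\Omega\u_i\cdot\boldsymbol{\omega}\,\d x=\varphi_i'(t)$), one obtains for a.e. $t$
\begin{align*}
	f_i(t)g_{1i}(t)&=\varphi_i'(t)+\int_\Omega\Big(\mu\nabla\u_i\cdot\nabla\boldsymbol{\omega}+(\u_i\cdot\nabla)\u_i\cdot\boldsymbol{\omega}\\
	&\quad+\alpha\,\u_i\cdot\boldsymbol{\omega}+\beta|\u_i|^{r-1}\u_i\cdot\boldsymbol{\omega}\Big)\,\d x,
\end{align*}
where $g_{1i}(t)=\int_\Omega\g_i(x,t)\cdot\boldsymbol{\omega}(x)\,\d x$. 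Writing $f_1g_{11}-f_2g_{12}=g_{11}f+f_2(g_{11}-g_{12})$ with $g_{11}-g_{12}=\int_\Omega\g(t)\cdot\boldsymbol{\omega}\,\d x$, and dividing by $g_{11}(t)$ (whose modulus is $\ge g_0$), I get
\[
|f(t)|\le\frac{1}{g_0}\Big(|\varphi'(t)|+\Big|\int_\Omega\g(t)\cdot\boldsymbol{\omega}\,\d x\Big|\,|f_2(t)|+\mathcal I(t)\Big),
\]
where $\mathcal I(t)$ denotes the modulus of the integral over $\Omega$ of $\mu\nabla\u\cdot\nabla\boldsymbol{\omega}+[(\u_1\cdot\nabla)\u_1-(\u_2\cdot\nabla)\u_2]\cdot\boldsymbol{\omega}+\alpha\,\u\cdot\boldsymbol{\omega}+\beta[|\u_1|^{r-1}\u_1-|\u_2|^{r-1}\u_2]\cdot\boldsymbol{\omega}$.

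\emph{Estimating $\mathcal I(t)$.} Here I would reuse the bounds already established in the proof of Theorem \ref{thm3.2}. Integration by parts (legitimate since $\u\in\V$ and $\boldsymbol{\omega}\in\H^2(\Omega)\cap\V$) gives $|\mu\int_\Omega\nabla\u\cdot\nabla\boldsymbol{\omega}\,\d x|\le\mu\|\Delta\boldsymbol{\omega}\|_{\H}\|\u\|_{\H}$; the convective difference equals $\int_\Omega(\u_1\cdot\nabla)\u\cdot\boldsymbol{\omega}\,\d x+\int_\Omega(\u\cdot\nabla)\u_2\cdot\boldsymbol{\omega}\,\d x$, and moving the gradient onto $\boldsymbol{\omega}$ (using that $\u_1,\u$ are divergence free and vanish on $\partial\Omega$) together with $\nabla\boldsymbol{\omega}\in\widetilde{\L}^\infty$ bounds it by $\|\nabla\boldsymbol{\omega}\|_{\widetilde{\L}^\infty}\|\u\|_{\H}(\|\u_1\|_{\H}+\|\u_2\|_{\H})$; the Darcy term is $\le\alpha\|\boldsymbol{\omega}\|_{\H}\|\u\|_{\H}$; and the Forchheimer difference is $\le\beta\|\boldsymbol{\omega}\|_{\widetilde{\L}^\infty}\||\u_1|^{r-1}\u_1-|\u_2|^{r-1}\u_2\|_{\L^1}$, which by the Taylor-formula argument \eqref{3j} (with Gagliardo--Nirenberg as in \eqref{3j1} for $r\in(2,3]$, or the $\L^2$-based version as in \eqref{3o1} for $r\in[1,2]$) is controlled pointwise by a constant times $\|\u(t)\|_{\H}\big(\|\u_1(t)\|_{\widetilde{\L}^{2(r-1)}}^{r-1}+\|\u_2(t)\|_{\widetilde{\L}^{2(r-1)}}^{r-1}\big)$. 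Also $|\int_\Omega\g(t)\cdot\boldsymbol{\omega}\,\d x|\le\|\boldsymbol{\omega}\|_{\H}\|\g(t)\|_{\L^2}$.

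\emph{Passing to $\mathrm{L}^2(0,T)$ and closing.} Taking $\mathrm{L}^2(0,T)$ norms in the pointwise bound, using $\|\varphi'\|_{\mathrm{L}^2(0,T)}\le\|\varphi\|_{\mathrm{H}^1(0,T)}$ and $\|f_2\|_{\mathrm{L}^2(0,T)}\le\widetilde{a}$, and pulling the $\mathrm{L}^\infty(0,T;\H)$- and $\mathrm{L}^2(0,T;\V)$-norms of the fixed solutions $\u_1,\u_2$ (which are data quantities by \eqref{E1}, \eqref{E2}; for $r\in(2,3]$ one also uses $\int_0^T\|\u_i\|_{\V}^{2(r-2)}\,\d t\le T^{3-r}(\int_0^T\|\u_i\|_{\V}^2\,\d t)^{r-2}$ by Hölder in time) out of the time integrals, every $\u$-contribution is bounded by a data constant times a genuine positive power of $T$ times $\sup_{t\in[0,T]}\|\u(t)\|_{\H}$. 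This yields
\[
\|f\|_{\mathrm{L}^2(0,T)}\le C\big(\|\varphi\|_{\mathrm{H}^1(0,T)}+\sup_{t\in[0,T]}\|\g(t)\|_{\L^2}\big)+C\,T^{\delta}\sup_{t\in[0,T]}\|\u(t)\|_{\H}
\]
for some $\delta>0$ and $C$ depending only on the data, $\mu,\alpha,\beta,r,T,\Omega$. Inserting the bound of Lemma \ref{lemma11}, i.e. $\sup_{t\in[0,T]}\|\u(t)\|_{\H}\le C(\|\u_0\|_{\H}+\|f\|_{\mathrm{L}^2(0,T)}+\sup_{t\in[0,T]}\|\g(t)\|_{\L^2})$, produces a term $C\,T^{\delta}\|f\|_{\mathrm{L}^2(0,T)}$ on the right. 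Under the standing hypotheses of Theorem \ref{thm3.2} ($T$ small, and for $r=3$ after the additional smallness imposed on the data), this coefficient is strictly less than one and may be absorbed into the left-hand side, which gives \eqref{S8}.

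\emph{Main obstacle.} The delicate point is precisely this last closing step: Lemma \ref{lemma11} estimates $\u=\u_1-\u_2$ only through $\|f\|_{\mathrm{L}^2(0,T)}$ itself, so the inequality for $\|f\|_{\mathrm{L}^2(0,T)}$ is implicit, and one has to check carefully that \emph{every} occurrence of a $\u$-norm in the bound for $\|f\|_{\mathrm{L}^2(0,T)}$ really comes with a small (positive power of $T$) prefactor, so that the absorption is licit. Keeping track of the Forchheimer exponents, which branch at $r=2$, is the only other place that needs a little care.
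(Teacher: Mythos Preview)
Your approach is essentially the same as the paper's: test the (subtracted) equation with $\boldsymbol{\omega}$, estimate each contribution exactly as in \eqref{3i}--\eqref{3j1} and \eqref{3o1}, pass to $\mathrm{L}^2(0,T)$, and then invoke Lemma~\ref{lemma11} (equation \eqref{S1}) to close. The paper takes the inner product of the subtracted system \eqref{S2} with $\boldsymbol{\omega}$ to obtain \eqref{S9}--\eqref{S92}, then splits into the two $r$-ranges and substitutes \eqref{E1}, \eqref{E2}, \eqref{S1}; you instead subtract the two scalar identities $f_i g_{1i}=\varphi_i'+\ldots$ before dividing by $g_{11}$. This is only a cosmetic reorganisation. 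One small by-product of the paper's version worth noting: because $\int_\Omega\u\cdot\boldsymbol{\omega}\,\d x=\varphi(t)$ by the overdetermination, the Darcy term there produces $\alpha|\varphi(t)|$ directly (hence the $\|\varphi\|_{\mathrm{L}^2}^2$ in \eqref{S92}), whereas your bound $\alpha\|\boldsymbol{\omega}\|_{\H}\|\u\|_{\H}$ throws it into the absorbable $\u$-pile; both are fine.

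Where you are actually more careful than the paper is the closing step. The paper simply writes ``Substituting the estimates \eqref{E1}, \eqref{E2}, and \eqref{S1} in \eqref{S10}, we arrive at \ldots'', but \eqref{S1} contains $\|f\|_{\mathrm{L}^2(0,T)}^2$ on its right-hand side, so the resulting inequality is implicit, exactly as you point out. Your observation that each $\u$-contribution carries a prefactor $T^{\delta}$ (with $\delta=(3-r)/2$ for the Forchheimer piece when $r\in(2,3)$ and $\delta=1/2$ otherwise), together with the standing smallness from Theorem~\ref{thm3.2} (and the extra data smallness at $r=3$ where $\delta=0$), is precisely what makes the absorption licit. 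So your proof is correct and, on this point, more explicit than the paper's.
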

	\begin{proof}
		Taking the inner product with $\boldsymbol{\omega}(x)$ in the equation \eqref{S2}, we obtain
		\begin{align}\label{S9}
			&\int_{\Omega}\u_t\cdot \boldsymbol{\omega} \d x-\int_{\Omega}\mu \Delta \u \cdot \boldsymbol{\omega} \d x+\int_{\Omega}(\u_1 \cdot \nabla)\u \cdot \boldsymbol{\omega} \d x\nonumber+\int_{\Omega}(\u \cdot \nabla)\u_2 \cdot \boldsymbol{\omega} \d x\no\\&\quad+\alpha\int_{\Omega} \u \cdot \boldsymbol{\omega} \d x+\beta \int_{\Omega} \left(|\u_1|^{r-1}\u_1-|\u_2|^{r-1}\u_2\right)\cdot \boldsymbol{\omega} \d x\no\\&\quad=\int_{\Omega}f \g_1 \cdot \boldsymbol{\omega} \d x+\int_{\Omega}f_2 \g \cdot \boldsymbol{\omega} \d x.
		\end{align}
		From the above equation, we have the following estimate:
		\begin{align}\label{S91}
			|f(t)|^2g_0^2 \no&\leq 2|f_2(t)|^2\|\g(t)\|_{\L^2}^2\|\boldsymbol{\omega}\|_{\H}^2 +C\bigg[|\varphi'(t)|^2+\|\u(t)\|_{\H}^2\|\Delta \boldsymbol{\omega}\|_{\H}^2\no\\&\quad+\|\nabla \boldsymbol{\omega}\|_{\widetilde{\L}^{\infty}}^2\|\u(t)\|_{\H}^2\big(\|\u_1(t)\|_{\H}^2+\|\u_2(t)\|_{\H}^2\big)+ |\varphi(t)|^2\bigg]\no\\&\quad+C\|\boldsymbol{\omega}\|_{\widetilde{\L}^\infty}^2\left\||\u_1(t)|^{r-1}\u_1(t)-|\u_2(t)|^{r-1}\u_2(t)\right\|_{\L^1}^2.
		\end{align}
		Integrating the estimate \eqref{S91} from $0$ to $T$, we deduce that
		\begin{align}\label{S92}
			&g_0^2\|f\|_{\mathrm{L}^2(0,T)}^2 
			\no\\&\leq 2 \sup_{t\in[0,T]}\|\g(t)\|_{\L^2}^2\|\boldsymbol{\omega}\|_{\H}^2 \|f_2\|_{\mathrm{L}^2(0,T)}^2+C\bigg[\|\varphi\|_{\mathrm{L}^2(0,T)}^2+\|\varphi'\|_{\mathrm{L}^2(0,T)}^2\no\\&\quad+T \sup_{t\in[0,T]}\|\u(t)\|_{\H}^2\bigg\{\|\Delta \boldsymbol{\omega}\|_{\H}^2+\|\nabla \boldsymbol{\omega}\|_{\widetilde{\L}^{\infty}}^2\bigg(\sup_{t\in[0,T]}\|\u_1(t)\|_{\H}^2+\sup_{t\in[0,T]}\|\u_2(t)\|_{\H}^2\bigg)\bigg\}\bigg]
			\no\\&\quad+C\|\boldsymbol{\omega}\|_{\H^2}^2\int_0^T\left\||\u_1(t)|^{r-1}\u_1(t)-|\u_2(t)|^{r-1}\u_2(t)\right\|_{\L^1}^2 \d t.
		\end{align}
		\vskip 0.2 cm
		\noindent\textbf{Case I:} \emph{ $r\in (2,3]$.}
		Using the estimate \eqref{3j1} in \eqref{S92}, one can deduce that
		\begin{align}\label{S10}
			&g_0^2\|f\|_{\mathrm{L}^2(0,T)}^2 
			\no\\&\leq 2 \sup_{t\in[0,T]}\|\g(t)\|_{\L^2}^2\|\boldsymbol{\omega}\|_{\H}^2 \|f_2\|_{\mathrm{L}^2(0,T)}^2+C\bigg[\|\varphi\|_{\mathrm{L}^2(0,T)}^2+\|\varphi'\|_{\mathrm{L}^2(0,T)}^2\no\\&\quad+T \sup_{t\in[0,T]}\|\u(t)\|_{\H}^2\bigg\{\|\Delta \boldsymbol{\omega}\|_{\H}^2+\|\nabla \boldsymbol{\omega}\|_{\widetilde{\L}^{\infty}}^2\bigg(\sup_{t\in[0,T]}\|\u_1(t)\|_{\H}^2+\sup_{t\in[0,T]}\|\u_2(t)\|_{\H}^2\bigg)\bigg\}\bigg]\no\\&\quad +
			CT^{3-r}\|\boldsymbol{\omega}\|_{\H^2}^2 \sup_{t\in[0,T]}\|\u(t)\|_{\H}^2\bigg\{\sup_{t\in[0,T]} \| \u_1(t)\|_{\H}^{2}\bigg(\int_0^T \| \u_1(t)\|_{\V}^2 \d t\bigg)^{r-2}\no\\&\quad+\sup_{t\in[0,T]}\| \u_2(t)\|_{\H}^{2}\bigg(\int_0^T \| \u_2(t)\|_{\V}^2 \d t\bigg)^{r-2}\bigg\}\bigg].
		\end{align}
		Substituting the estimates \eqref{E1}, \eqref{E2}, and \eqref{S1} in \eqref{S10}, we arrive at
		\begin{align*}
			\|f\|_{\mathrm{L}^2(0,T)}^2 \leq C\bigg(\|\u_0\|_{\H}^2+\|\varphi\|_{\mathrm{L}^2(0,T)}^2+\|\varphi'\|_{\mathrm{L}^2(0,T)}^2+\sup_{t\in[0,T]}\|\g(t)\|_{\L^2}^2\bigg),
		\end{align*}
		and finally we get
		\begin{align*}
			\|f\|_{\mathrm{L}^2(0,T)}\leq C\bigg(\|\u_0\|_{\H}+\|\varphi\|_{\mathrm{H}^1(0,T)}+\sup_{t\in[0,T]}\|\g(t)\|_{\L^2}\bigg),
		\end{align*}
		which is the estimate \eqref{S8}.
		\vskip 0.2 cm
		\noindent\textbf{Case II:} \emph{ $r\in [1,2]$.}  Substituting the estimate \eqref{3o1} in \eqref{S92}, we obtain 
		\begin{align}\label{S11}
			&g_0^2\|f\|_{\mathrm{L}^2(0,T)}^2 
			\no\\&\leq 2 \sup_{t\in[0,T]}\|\g(t)\|_{\L^2}^2\|\boldsymbol{\omega}\|_{\H}^2 \|f_2\|_{\mathrm{L}^2(0,T)}^2+C\bigg[\|\varphi\|_{\mathrm{L}^2(0,T)}^2+\|\varphi'\|_{\mathrm{L}^2(0,T)}^2\no\\&\quad+T \sup_{t\in[0,T]}\|\u(t)\|_{\H}^2\bigg\{\|\Delta \boldsymbol{\omega}\|_{\H}^2+\|\nabla \boldsymbol{\omega}\|_{\widetilde{\L}^{\infty}}^2\bigg(\sup_{t\in[0,T]}\|\u_1(t)\|_{\H}^2+\sup_{t\in[0,T]}\|\u_2(t)\|_{\H}^2\bigg)\bigg\}\bigg]\no\\&\quad +CT\|\boldsymbol{\omega}\|_{\H^2}^2\sup_{t\in[0,T]}\|\u(t)\|_{\H}^2
			\bigg(\sup_{t\in[0,T]} \| \u_1(t)\|_{\H}^{2(r-1)}+\sup_{t\in[0,T]}\| \u_2(t)\|_{\H}^{2(r-1)}\bigg).
		\end{align}
		Using the estimates \eqref{E1}, \eqref{E2}, and \eqref{S1} in \eqref{S11}, one can easily deduce the estimate \eqref{S8}, which completes the proof.
		\iffalse
		\begin{align*}
			\|f\|_{\mathrm{L}^2(0,T)}^2 \leq C\bigg(\|\u_0\|_{\H}^2+\|\varphi\|_{\mathrm{H}^1(0,T)}^2+\sup_{t\in[0,T]}\|\g(t)\|_{\L^2}^2\bigg),
		\end{align*}
		which is the estimate \eqref{S8} and completes the proof.
		\fi
	\end{proof}
\begin{proof}[Proof of part (ii) of Theorem \ref{thm2}]
		\emph{Stability of the pressure field $ p$:} The equation \eqref{S2} can be used directly to demonstrate the stability of the pressure field $p$.
	Taking divergence on both sides of the equation \eqref{S2}, we deduce that
	\begin{align*}
		-\Delta p &=-\nabla \cdot(f\g_1+f_2\g)+\big\{\nabla \cdot \big[(\u_1 \cdot \nabla)\u+(\u \cdot \nabla)\u_2+\beta \left(|\u_1|^{r-1}\u_1-|\u_2|^{r-1}\u_2\right)\big] \big\},
	\end{align*}
in the weak sense. The above equation yields
\begin{align}\label{S12}
		 p&=(-\Delta)^{-1}\big\{\nabla \cdot \big[-(f\g_1+f_2\g)+\nabla \cdot(\u_1 \otimes \u)\no\\&\quad+\nabla \cdot(\u \otimes \u_2)+\beta \left(|\u_1|^{r-1}\u_1-|\u_2|^{r-1}\u_2\right)\big] \big\}.
	\end{align}
	 Taking $\mathrm{L}^2$-norm on both  sides in the equation \eqref{S12}, and then using elliptic regularity (Cattabriga's regularity theorem), H\"older's inequality and Taylor's formula, we have
	\begin{align}\label{S13}
		\| p \|_{\mathrm{L}^2} &=\big\|(-\Delta)^{-1}\big\{\nabla \cdot \big[-(f\g_1+f_2\g)+\nabla \cdot(\u_1 \otimes \u)\no\\&\qquad+\nabla \cdot(\u \otimes \u_2)+\beta \left(|\u_1|^{r-1}\u_1-|\u_2|^{r-1}\u_2\right)\big] \big\}\big\|_{\mathrm{L}^2}\no\\&\leq C
		\big\|\big\{\nabla \cdot \big[-(f\g_1+f_2\g)+\nabla \cdot(\u_1 \otimes \u)\no\\&\qquad+\nabla \cdot(\u \otimes \u_2)+\beta \left(|\u_1|^{r-1}\u_1-|\u_2|^{r-1}\u_2\right)\big] \big\}\big\|_{\H^{-2}}
		\no\\&\leq C\|f\g_1+f_2\g\|_{\V'}+ C\| \u_1 \otimes \u\|_{\H}+C\| \u\otimes \u_2\|_{\H}\no\\&\qquad+C\beta\left\| |\u_1|^{r-1}\u_1-|\u_2|^{r-1}\u_2\right\|_{\V'} \no\\
		& \leq C\|f\g_1+f_2\g\|_{\L^2}+C \left(\| \u_1\|_{\widetilde{\L}^4}+\| \u_2\|_{\widetilde{\L}^4}\right) \|\u\|_{\widetilde{\L}^4}\no\\&\qquad+C\left\| |\u_1|^{r-1}\u_1-|\u_2|^{r-1}\u_2\right\|_{\widetilde{\L}^\frac{r+1}{r}}\no\\
		& \leq C |f|\|\g_1\|_{\L^2}+C|f_2|\|\g\|_{\L^2}+C\left(\| \u_1\|_{\widetilde{\L}^4}+\| \u_2\|_{\widetilde{\L}^4}\right) \|\u\|_{\widetilde{\L}^4}\no\\&\quad+C\|\u_1-\u_2\|_{\widetilde{\L}^{r+1}}\left(\|\u_1\|_{\widetilde{\L}^{r+1}}^{r-1}+\|\u_2\|_{\widetilde{\L}^{r+1}}^{r-1}\right),
	\end{align}
	where we have used the embedding $\V \subset \widetilde{\L}^{r+1} \subset \H \equiv \H'\subset \widetilde{\L}^\frac{r+1}{r} \subset \V'$. Taking the $(\frac{r+1}{r})^\text{th}$ power on both sides in \eqref{S13} and then integrating the resulting inequality from $0$ to $T$, followed by applying H\"older's inequality, we deduce that
	\begin{align*}
		\int_0^T\| p(t) \|_{\mathrm{L}^2}^\frac{r+1}{r} \d t &\leq C \sup_{t\in[0,T]}\|\g_1(t)\|_{\L^2}^\frac{r+1}{r}\int_0^T|f(t)|^\frac{r+1}{r}\d t+ C \sup_{t\in[0,T]}\|\g(t)\|_{\L^2}^\frac{r+1}{r}\int_0^T|f_2(t)|^\frac{r+1}{r}\d t\\&\quad+C\int_0^T\| \u_1(t)\|_{\widetilde{\L}^4}^\frac{r+1}{r}\|\u(t)\|_{\widetilde{\L}^4}^\frac{r+1}{r}\d t+C \int_0^T\| \u_2(t)\|_{\widetilde{\L}^4}^\frac{r+1}{r}\|\u(t)\|_{\widetilde{\L}^4}^\frac{r+1}{r}\d t\\&\quad+C\int_0^T\|\u_1(t)\|_{\widetilde{\L}^{r+1}}^\frac{(r-1)(r+1)}{r} \|\u(t)\|_{\widetilde{\L}^{r+1}}^\frac{r+1}{r}\d t\\&\quad+C\int_0^T\|\u_2(t)\|_{\widetilde{\L}^{r+1}}^\frac{(r-1)(r+1)}{r} \|\u(t)\|_{\widetilde{\L}^{r+1}}^\frac{r+1}{r}\d t
		%\\&	\leq CT^\frac{r-1}{2r} \sup_{t\in[0,T]}\|\g_1(t)\|_{\L^2}^\frac{r+1}{r}\|f\|_{\mathrm{L}^2(0,T)}^\frac{r+1}{r}+ CT^\frac{r-1}{2r} \sup_{t\in[0,T]}\|\g(t)\|_{\L^2}^\frac{r+1}{r}\|f_2\|_{\mathrm{L}^2(0,T)}^\frac{r+1}{r}\\&\quad+C\bigg(\int_0^T\| \u(t)\|_{\widetilde{\L}^4}^4\d t\bigg)^\frac{r+1}{4r}\bigg(\int_0^T\| \u_1(t)\|_{\widetilde{\L}^4}^\frac{4(r+1)}{3r-1}\d t\bigg)^\frac{3r-1}{4r}	\\&\quad+C\bigg(\int_0^T\| \u(t)\|_{\widetilde{\L}^4}^4\d t\bigg)^\frac{r+1}{4r}\bigg(\int_0^T\| \u_2(t)\|_{\widetilde{\L}^4}^\frac{4(r+1)}{3r-1}\d t\bigg)^\frac{3r-1}{4r}		\\&\quad+C\bigg(\int_0^T\| \u(t)\|_{\widetilde{\L}^{r+1}}^{r+1}\d t\bigg)^\frac{1}{r}\bigg(\int_0^T\| \u_1(t)\|_{\widetilde{\L}^{r+1}}^{r+1}\d t\bigg)^\frac{r-1}{r}			\\&\quad+C\bigg(\int_0^T\| \u(t)\|_{\widetilde{\L}^{r+1}}^{r+1}\d t\bigg)^\frac{1}{r}\bigg(\int_0^T\| \u_2(t)\|_{\widetilde{\L}^{r+1}}^{r+1}\d t\bigg)^\frac{r-1}{r}
			\\&	\leq CT^\frac{r-1}{2r} \sup_{t\in[0,T]}\|\g_1(t)\|_{\L^2}^\frac{r+1}{r}\|f\|_{\mathrm{L}^2(0,T)}^\frac{r+1}{r}+ CT^\frac{r-1}{2r} \sup_{t\in[0,T]}\|\g(t)\|_{\L^2}^\frac{r+1}{r}\|f_2\|_{\mathrm{L}^2(0,T)}^\frac{r+1}{r}\\&\quad+CT^\frac{r-1}{2r}\bigg(\int_0^T\| \u(t)\|_{\widetilde{\L}^4}^4\d t\bigg)^\frac{r+1}{4r}\bigg(\int_0^T\| \u_1(t)\|_{\widetilde{\L}^4}^{4}\d t\bigg)^\frac{r+1}{4r}
			\\&\quad+CT^\frac{r-1}{2r}\bigg(\int_0^T\| \u(t)\|_{\widetilde{\L}^4}^4\d t\bigg)^\frac{r+1}{4r}\bigg(\int_0^T\| \u_2(t)\|_{\widetilde{\L}^4}^4\d t\bigg)^\frac{r+1}{4r}
			\\&\quad+C\bigg(\int_0^T\| \u(t)\|_{\widetilde{\L}^{r+1}}^{r+1}\d t\bigg)^\frac{1}{r}\bigg(\int_0^T\| \u_1(t)\|_{\widetilde{\L}^{r+1}}^{r+1}\d t\bigg)^\frac{r-1}{r}
			\\&\quad+C\bigg(\int_0^T\| \u(t)\|_{\widetilde{\L}^{r+1}}^{r+1}\d t\bigg)^\frac{1}{r}\bigg(\int_0^T\| \u_2(t)\|_{\widetilde{\L}^{r+1}}^{r+1}\d t\bigg)^\frac{r-1}{r}.
	\end{align*}
Substituting the energy estimates \eqref{E2}, \eqref{S1} and \eqref{S8} in the above estimate, we arrive at
\begin{align*}
	\|  p \|_{\mathrm{L}^\frac{r+1}{r}(0,T;\mathrm{L}^2(\Omega))}
	%&\leq CT^\frac{r-1}{2(r+1)} \sup_{t\in[0,T]}\|\g_1(t)\|_{\L^2}\|f\|_{\mathrm{L}^2(0,T)}\\&\quad+ CT^\frac{r-1}{2(r+1)} \sup_{t\in[0,T]}\|\g(t)\|_{\L^2}\|f_2\|_{\mathrm{L}^2(0,T)}\\&\quad+CT^\frac{r-1}{2(r+1)}\bigg(\int_0^T\| \u(t)\|_{\widetilde{\L}^4}^4\d t\bigg)^\frac{1}{4}\bigg(\int_0^T\| \u_1(t)\|_{\widetilde{\L}^4}^{4}\d t\bigg)^\frac{1}{4}	\\&\quad+CT^\frac{r-1}{2(r+1)}\bigg(\int_0^T\| \u(t)\|_{\widetilde{\L}^4}^4\d t\bigg)^\frac{1}{4}\bigg(\int_0^T\| \u_2(t)\|_{\widetilde{\L}^4}^4\d t\bigg)^\frac{1}{4}	\\&\quad+C\bigg(\int_0^T\| \u(t)\|_{\widetilde{\L}^{r+1}}^{r+1}\d t\bigg)^\frac{1}{r+1}\bigg(\int_0^T\| \u_1(t)\|_{\widetilde{\L}^{r+1}}^{r+1}\d t\bigg)^\frac{r-1}{r+1}	\\&\quad+C\bigg(\int_0^T\| \u(t)\|_{\widetilde{\L}^{r+1}}^{r+1}\d t\bigg)^\frac{1}{r+1}\bigg(\int_0^T\| \u_2(t)\|_{\widetilde{\L}^{r+1}}^{r+1}\d t\bigg)^\frac{r-1}{r+1}\\
	& \leq C\bigg(\|\u_0\|_{\H}+ \sup_{t\in[0,T]}\|\g(t)\|_{\L^2}+\|f\|_{\L^2}\bigg)
	\\& \leq C\bigg(\|\u_0\|_{\H}+\|\varphi\|_{\mathrm{H}^1(0,T)}+ \sup_{t\in[0,T]}\|\g(t)\|_{\L^2}\bigg),
\end{align*}
	where $C$ depends on the input data, $\mu,\alpha,\beta,r$,$T$ and $\Omega$.
	\iffalse 
		Finally, we have
	\begin{align*}
		\|  p \|_{\mathrm{L}^\frac{r+1}{r}(0,T;\mathrm{L}^2(\Omega))}\leq C\bigg(\|\u_0\|_{\H}+\|\varphi\|_{\mathrm{H}^1(0,T)}+ \sup_{t\in[0,T]}\|\g(t)\|_{\L^2} \bigg).
	\end{align*}
\fi 
	We can see that the solution depends continuously on the data from the stability estimate of the pressure field $p$ and Lemmas \ref{lemma11}-\ref{lemma2}. We infer the Lipschitz stability of the solution $(\u,  p, f)$ as
	\begin{align*}
		\nonumber&\|\u_1-\u_2\|_{\mathrm{L}^\infty(0,T;\H)}+	\|\u_1-\u_2\|_{\mathrm{L}^2(0,T;\V)}+	\|\u_1-\u_2\|_{\mathrm{L}^{r+1}(0,T;\widetilde{\L}^{r+1})}\\&\quad+\|p_1-p_2 \|_{\mathrm{L}^\frac{r+1}{r}(0,T;\mathrm{L}^2(\Omega))}+\|f_1-f_2\|_{\mathrm{L}^2(0,T)} \nonumber\\&\leq C\bigg(\|\u_{01}-\u_{02}\|_{ \H}+\sup_{t\in[0,T]}\|(\g_1-\g_2)(t)\|_{\L^2}+\|\varphi_1-\varphi_2\|_{\mathrm{H}^1(0,T)}\bigg),
	\end{align*}
which completes the proof of part (ii) of Theorem \ref{thm2}.
	\end{proof} 
	\iffalse 
	\begin{remark}
		In \cite{JF}, the authors have 
		proved the well-posedness result of an inverse problem corresponding to the 2D Navier-Stokes equations taking initial data $\u_0 \in \H$ and the viscosity constant $\mu$ sufficiently large. However, in this work, we are assuming initial data $\u_0 \in \H^2(\Omega)\cap \V$ in better space and taking the restriction only on $\f$. We are planing to addressed the case of initial data $\u_0 \in \H$ in one of our future works.
	\end{remark}
	\fi 
		\begin{appendix}
		\renewcommand{\thesection}{\Alph{section}}
		\numberwithin{equation}{section}
		\section{Energy estimates}\label{sec4}\setcounter{equation}{0} 
		Since the existence and uniqueness of weak solutions for the system \eqref{1a}-\eqref{1d} are known, we derive  a number of a-priori estimates for the solutions. 	To obtain the energy estimates of the solutions of CBF equations \eqref{1a}-\eqref{1d}, we assume that $\u_0 \in \H $, $\g \in \C( [0,T];\L^2(\Omega))$ and $f \in \mathrm{L^2}(0,T)$.	The next lemma provides the usual energy estimates for the CBF equations \eqref{1a}-\eqref{1d}.
		\begin{lemma}\label{lemma1}
			Let $(\u(\cdot),p(\cdot))$ be the unique weak solution of the CBF  equations \eqref{1a}-\eqref{1d} and $\u_0 \in \H$. Then, the following estimate holds: 
			\begin{align}\label{E1}
				\sup_{t\in[0,T]}\|\u(t)\|_{\H} \leq \|\u_0\|_{\H}+	T^{1/2}\sup_{t\in[0,T]}\|\g(t)\|_{\L^2}\|f\|_{\mathrm{L}^2(0,T)},
			\end{align}
			and
			\begin{align}\label{E2}
				&\sup_{t\in[0,T]}\|\u(t)\|_{\H}^2+2\mu \int_0^T\|\u(t)\|_{\V}^2\d t+\alpha\int_0^T\|\u(t)\|_{\H}^2\d t+2\beta\int_0^T\|\u(t)\|_{\widetilde{\L}^{r+1}}^{r+1}\d t \nonumber\\&\leq \|\u_0\|_{\H}^2+\frac{1}{\alpha}\sup_{t\in[0,T]}\|\g(t)\|_{\L^2}^2\|f\|_{\mathrm{L}^2(0,T)}^2.
			\end{align}
		\end{lemma}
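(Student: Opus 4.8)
The plan is to test the CBF equations \eqref{1a} with $\u$ itself in the duality pairing between $\V$ and $\V'$, exploiting the standard cancellations of the incompressible nonlinearity. First I would recall that for $\u\in\V$ one has $\langle(\u\cdot\nabla)\u,\u\rangle=0$ (this holds because $\nabla\cdot\u=0$ and the boundary trace of $\u$ vanishes), and that $\langle-\mu\Delta\u,\u\rangle=\mu\|\u\|_{\V}^2$ and $\langle\nabla p,\u\rangle=0$. Hence, taking the inner product of \eqref{1a} with $\u(t)$ and using the Gelfand triple structure $\V\hookrightarrow\H\hookrightarrow\V'$, I get, for a.e. $t\in[0,T]$,
\begin{align*}
	\frac12\frac{\d}{\d t}\|\u(t)\|_{\H}^2+\mu\|\u(t)\|_{\V}^2+\alpha\|\u(t)\|_{\H}^2+\beta\|\u(t)\|_{\widetilde{\L}^{r+1}}^{r+1}=\big(f(t)\g(t),\u(t)\big),
\end{align*}
where I have used $\int_\Omega|\u|^{r-1}\u\cdot\u\,\d x=\|\u\|_{\widetilde\L^{r+1}}^{r+1}$.

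For \eqref{E2} I would bound the right-hand side by Cauchy--Schwarz and Young's inequality in the form $|(f\g,\u)|\le|f|\,\|\g\|_{\L^2}\|\u\|_{\H}\le\frac{1}{2\alpha}|f|^2\|\g\|_{\L^2}^2+\frac{\alpha}{2}\|\u\|_{\H}^2$, so that the $\frac{\alpha}{2}\|\u\|_{\H}^2$ term is absorbed into the $\alpha\|\u\|_{\H}^2$ on the left, leaving a coefficient $\frac{\alpha}{2}$; then I integrate from $0$ to $t$, take the supremum over $t\in[0,T]$, and use $\int_0^T|f(t)|^2\|\g(t)\|_{\L^2}^2\,\d t\le\sup_{t\in[0,T]}\|\g(t)\|_{\L^2}^2\|f\|_{\mathrm L^2(0,T)}^2$. (A small bookkeeping point: the statement \eqref{E2} carries the full $\alpha\int_0^T\|\u\|_{\H}^2$ and $2\mu,2\beta$ on the left and only $\frac1\alpha\sup\|\g\|^2\|f\|^2$ on the right, so one should instead not absorb into the $\alpha$-term but rather estimate $|(f\g,\u)|\le\frac12|f|^2\|\g\|_{\L^2}^2\cdot\frac1\alpha\cdot\alpha+\ldots$; more cleanly, write $|(f\g,\u)|\le\frac{1}{4\mu}\cdot 0+\ldots$ is not available, so one keeps $\frac{\d}{\d t}\|\u\|_{\H}^2+2\mu\|\u\|_{\V}^2+2\alpha\|\u\|_{\H}^2+2\beta\|\u\|_{\widetilde\L^{r+1}}^{r+1}\le\frac1\alpha|f|^2\|\g\|_{\L^2}^2+\alpha\|\u\|_{\H}^2$ and moves one copy of $\alpha\|\u\|_{\H}^2$ to the left.) This is pure routine.

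For \eqref{E1} I would not square: from the differential identity, dropping the nonnegative terms $\mu\|\u\|_{\V}^2,\alpha\|\u\|_{\H}^2,\beta\|\u\|_{\widetilde\L^{r+1}}^{r+1}$, I obtain $\frac{\d}{\d t}\|\u(t)\|_{\H}^2\le 2|f(t)|\,\|\g(t)\|_{\L^2}\|\u(t)\|_{\H}$, i.e. $\frac{\d}{\d t}\|\u(t)\|_{\H}\le|f(t)|\,\|\g(t)\|_{\L^2}$ wherever $\|\u(t)\|_{\H}>0$; integrating and using a standard regularization argument (or directly $\frac{\d}{\d t}\sqrt{\|\u\|_{\H}^2+\delta}\le|f|\|\g\|_{\L^2}$ and letting $\delta\downarrow0$) gives $\|\u(t)\|_{\H}\le\|\u_0\|_{\H}+\int_0^t|f(s)|\|\g(s)\|_{\L^2}\,\d s\le\|\u_0\|_{\H}+T^{1/2}\sup_{s\in[0,T]}\|\g(s)\|_{\L^2}\|f\|_{\mathrm L^2(0,T)}$ by Cauchy--Schwarz, which is \eqref{E1}. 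I expect no real obstacle here; the only point requiring a word of care is the justification of the energy identity itself (the chain rule for $t\mapsto\|\u(t)\|_{\H}^2$), which is standard for weak solutions in the class $\mathrm L^\infty(0,T;\H)\cap\mathrm L^2(0,T;\V)\cap\mathrm L^{r+1}(0,T;\widetilde\L^{r+1})$ with $\u_t\in\mathrm L^{\min\{2,(r+1)/r\}}(0,T;\V'+\widetilde\L^{(r+1)/r})$ and is already available from the cited well-posedness results \cite{MTM4}, so I would simply invoke it.
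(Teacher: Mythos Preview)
Your proposal is correct and follows essentially the same approach as the paper: test \eqref{1a} with $\u$, use the cancellations of the convective and pressure terms to obtain the energy identity, then drop the nonnegative dissipative terms and pass to $\frac{\d}{\d t}\|\u\|_{\H}$ for \eqref{E1}, while for \eqref{E2} apply Young's inequality $|(f\g,\u)|\le\frac{1}{2\alpha}|f|^2\|\g\|_{\L^2}^2+\frac{\alpha}{2}\|\u\|_{\H}^2$ and absorb the $\frac{\alpha}{2}\|\u\|_{\H}^2$ into the left (exactly the computation you settle on after the ``bookkeeping'' digression). Your added remarks on the $\sqrt{\|\u\|_{\H}^2+\delta}$ regularization and on invoking the chain rule for weak solutions are more careful than the paper, which simply writes ``which further reduces to'' for the square-root step, but the underlying argument is identical.
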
 
		\begin{proof}
		Taking the inner product with $\u(\cdot)$ in the equation \eqref{1a} and use the fact that $\langle(\u \cdot \nabla)\u,\u\rangle=0$ and $\langle\nabla p,\u\rangle=0$ to obtain 
			\begin{align}\label{E3}
				\frac{1}{2}\frac{\d}{\d t}\|\u(t)\|_{\H}^2+\mu \|\u(t)\|_{\V}^2+\alpha \|\u(t)\|_{\H}^2+\beta\|\u(t)\|_{\widetilde{\L}^{r+1}}^{r+1}=(f \g(t),\u(t)),
			\end{align}
			for a.e. $t\in[0,T]$. Simplifying \eqref{E3}, we get 
			\begin{align*}
				\frac{1}{2}\frac{\d}{\d t}\|\u(t)\|_{\H}^2
				\leq |f(t)|\| \g(t)\|_{\L^2}\|\u(t)\|_{\H},
			\end{align*}
			which further reduces to
			\begin{align}\label{E4}
				\frac{\d}{\d t}\|\u(t)\|_{\H}
				\leq |f(t)|\| \g(t)\|_{\L^2}.
			\end{align}
			Integrating the inequality \eqref{E4} from $0$ to $t$, and taking supremum over $0$ to $T$ on both sides, one can easily deduce  \eqref{E1}.\
			
			Using H\"older's and Young's inequalities, we estimate $|(f \g,\u)|$ as
			\begin{align}\label{E5}
				|(f \g,\u)| \leq|f|\|\g\|_{\L^2}\|\u\|_{\H} \leq\frac{1}{2\alpha}|f|^2\|\g\|_{\L^{2}}^{2}+\frac{\alpha}{2}\|\u\|_{\H}^{2}.
			\end{align}
			Substituting \eqref{E5} in \eqref{E3}, and then integrating it from $0$ to $t$, we find 
			\begin{align*}
				&\|\u(t)\|_{\H}^2+2\mu \int_0^t\|\u(s)\|_{\V}^2\d s+\alpha\int_0^t\|\u(s)\|_{\H}^2\d s +2\beta\int_0^t\|\u(s)\|_{\widetilde{\L}^{r+1}}^{r+1}\d s\\&\quad\leq \|\u_0\|_{\H}^2+\frac{1}{\alpha}\int_0^t|f(s)|^2\|\g(s)\|_{\L^2}^2\d s,
			\end{align*}
			for all $t \in [0,T]$ and \eqref{E2} follows. 
			\iffalse
			Thus, it is immediate that
			\begin{align*}
				&\sup_{t\in[0,T]}\|\u(t)\|_{\H}^2+2\mu \int_0^T\|\u(t)\|_{\V}^2\d t +2\beta\int_0^T\|\u(t)\|_{\widetilde{\L}^{r+1}}^{r+1}\d t\\&\qquad\leq \|\u_0\|_{\H}^2+\frac{T}{\alpha }\|\f\|_{\L^2}^2\|g\|_0^2.
			\end{align*}
			Hence, we get  $$\u\in\mathrm{L}^{\infty}(0,T;\H)\cap\mathrm{L}^2(0,T;\V)\cap\mathrm{L}^{r+1}(0, T ; \widetilde{\L}^{r+1}).$$ 
			\fi
		\end{proof}
		The next lemma establishes  stability estimates for the CBF equations \eqref{1a}-\eqref{1d}.
		\begin{lemma}
			Let $(\u_1(\cdot),p_1(\cdot))$ be the solution of the direct problem \eqref{1a}-\eqref{1d} corresponding to the external forcing $f_1 \g$ and the initial velocity $\u_{01}$; and $(\u_2(\cdot),p_2(\cdot))$ be the solution of the same problem corresponding to the external forcing $f_2 \g$ and the initial velocity $\u_{02}$. 	Then, for $\u_{01 },\u_{02}\in \H,$ $f_1,f_2\in\mathrm{L}^2(0,T)$ and $\g \in \C([0,T];\L^2(\Omega))$,  the following estimate holds:
			\begin{align}\label{E6}
				&\sup_{t\in[0,T]}\|(\u_1-\u_2)(t)\|_{\H}^2+\mu \int_0^T \|(\u_1-\u_2)(t)\|_{\V}^2\d  t\nonumber\\&\quad+\alpha\int_0^T \|(\u_1-\u_2)(t)\|_{\H}^2\d t+\frac{\beta}{2^{r-2}}\int_0^T\|(\u_1-\u_2)(t)\|_{\widetilde{\L}^{r+1}}^{r+1}\d t\nonumber\\&\leq \bigg(\|\u_{01}-\u_{02}\|_{\H}^2+\frac{1}{\alpha}\sup_{t\in[0,T]}\|\g(t)\|_{\L^2}^2\|f_1-f_2\|_{\mathrm{L}^2(0,T)}^2\bigg) \exp \left(\frac{2}{\mu}\int_0^T\|\u_2(t)\|_{\V}^2\d t \right). 
			\end{align}
			Moreover, for $\u_{01}=\u_{02}$, we have
			\begin{align}\label{E61}
				&\sup_{t\in[0,T]}\|(\u_1-\u_2)(t)\|_{\H}^2 \no\\&\leq \bigg(\frac{1}{\alpha}\sup_{t\in[0,T]}\|\g(t)\|_{\L^2}^2\|f_1-f_2\|_{\mathrm{L}^2(0,T)}^2\bigg) \exp \left(\frac{2}{\mu}\int_0^T\|\u_2(t)\|_{\V}^2\d t \right). 
			\end{align}
		\end{lemma}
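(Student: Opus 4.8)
The plan is to run a standard energy estimate for the difference of the two solutions. Set $\u:=\u_1-\u_2$ and $p:=p_1-p_2$. Subtracting the two copies of \eqref{1a} and writing the convective term as $(\u_1\cdot\nabla)\u_1-(\u_2\cdot\nabla)\u_2=(\u_1\cdot\nabla)\u+(\u\cdot\nabla)\u_2$, one sees that $\u$ solves
\[
\u_t-\mu\Delta\u+(\u_1\cdot\nabla)\u+(\u\cdot\nabla)\u_2+\alpha\u+\beta\big(|\u_1|^{r-1}\u_1-|\u_2|^{r-1}\u_2\big)+\nabla p=(f_1-f_2)\g
\]
in $\V'$ for a.e.\ $t\in[0,T]$, with $\u(0)=\u_{01}-\u_{02}$. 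Taking the $\H$-inner product with $\u$ and using $\langle(\u_1\cdot\nabla)\u,\u\rangle=0$ together with $\langle\nabla p,\u\rangle=0$ gives the energy identity
\[
\frac12\frac{\d}{\d t}\|\u\|_{\H}^2+\mu\|\u\|_{\V}^2+\alpha\|\u\|_{\H}^2+\beta\big\langle|\u_1|^{r-1}\u_1-|\u_2|^{r-1}\u_2,\u\big\rangle=\big((f_1-f_2)\g,\u\big)-\big\langle(\u\cdot\nabla)\u_2,\u\big\rangle .
\]

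Next I would estimate the right-hand side term by term. By H\"older's and Young's inequalities, $|((f_1-f_2)\g,\u)|\le\frac{1}{2\alpha}|f_1-f_2|^2\|\g\|_{\L^2}^2+\frac{\alpha}{2}\|\u\|_{\H}^2$. For the trilinear term, H\"older's inequality, the Ladyzhenskaya inequality $\|\u\|_{\L^4}^2\le\sqrt2\|\u\|_{\H}\|\u\|_{\V}$, and Young's inequality give $|\langle(\u\cdot\nabla)\u_2,\u\rangle|\le\|\u\|_{\widetilde{\L}^4}^2\|\u_2\|_{\V}\le\sqrt2\|\u\|_{\H}\|\u\|_{\V}\|\u_2\|_{\V}\le\frac{\mu}{2}\|\u\|_{\V}^2+\frac1\mu\|\u\|_{\H}^2\|\u_2\|_{\V}^2$. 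The Forchheimer difference term is nonnegative by the monotonicity of $\v\mapsto|\v|^{r-1}\v$ and in fact bounds $\frac{\beta}{2^{r-1}}\|\u\|_{\widetilde{\L}^{r+1}}^{r+1}$ from below, exactly as in \eqref{S6}, so I keep it on the left. Absorbing $\frac{\mu}{2}\|\u\|_{\V}^2$ and $\frac{\alpha}{2}\|\u\|_{\H}^2$ and multiplying through by $2$, I arrive at
\[
\frac{\d}{\d t}\|\u\|_{\H}^2+\mu\|\u\|_{\V}^2+\alpha\|\u\|_{\H}^2+\frac{\beta}{2^{r-2}}\|\u\|_{\widetilde{\L}^{r+1}}^{r+1}\le\frac1\alpha|f_1-f_2|^2\|\g\|_{\L^2}^2+\frac2\mu\|\u\|_{\H}^2\|\u_2\|_{\V}^2 .
\]

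Finally I integrate over $(0,t)$, drop the nonnegative $\V$- and $\widetilde{\L}^{r+1}$-integrals from the left, and bound $\frac1\alpha\int_0^t|f_1-f_2(s)|^2\|\g(s)\|_{\L^2}^2\,\d s\le\frac1\alpha\sup_{t\in[0,T]}\|\g(t)\|_{\L^2}^2\|f_1-f_2\|_{\mathrm{L}^2(0,T)}^2$. Applying the integral form of Gronwall's inequality with the integrable weight $\frac2\mu\|\u_2(\cdot)\|_{\V}^2$ (finite since $\u_2\in\mathrm{L}^2(0,T;\V)$ is a weak solution, cf.\ \eqref{E2}) yields the pointwise bound for $\|\u(t)\|_{\H}^2$ appearing on the right-hand side of \eqref{E6}; re-inserting this bound into the integrated inequality controls the three remaining integrals on the left, and taking the supremum over $t\in[0,T]$ completes the proof of \eqref{E6}. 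Setting $\u_{01}=\u_{02}$ removes the $\|\u_{01}-\u_{02}\|_{\H}^2$ term and gives \eqref{E61}. The only genuinely $r$-dependent step is the treatment of the Forchheimer difference term: it rests on the pointwise monotonicity inequality for $\v\mapsto|\v|^{r-1}\v$ and on the elementary bound comparing $|\u_1|^{r-1}+|\u_2|^{r-1}$ with $|\u_1-\u_2|^{r-1}$, which is what produces the constant $2^{r-2}$; every other step is a routine energy argument, valid for $r\in[1,3]$ in two dimensions, where the weak solutions are regular enough to justify the integration by parts and the vanishing of the pressure contribution.
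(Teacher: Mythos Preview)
Your proof is correct and follows essentially the same route as the paper: subtract the equations, test with $\u_1-\u_2$, estimate the forcing by Cauchy--Schwarz/Young, the trilinear term by H\"older/Ladyzhenskaya/Young, use the monotonicity lower bound $\beta\langle|\u_1|^{r-1}\u_1-|\u_2|^{r-1}\u_2,\u\rangle\ge\frac{\beta}{2^{r-1}}\|\u\|_{\widetilde{\L}^{r+1}}^{r+1}$ for the Forchheimer term, integrate in time, and apply Gronwall. The only cosmetic difference is that the paper derives the Forchheimer lower bound on the spot (as \eqref{E9}--\eqref{E11}) rather than citing \eqref{S6}.
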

		\begin{proof}
			It is clear that the pair $\big((\u_1-\u_2)(\cdot),(p_1-p_2)(\cdot)\big)$ satisfies the following system:
			\begin{equation}\label{E7}
				\left\{
				\begin{aligned}
					(\u_1-\u_2)_t-\mu \Delta (\u_1-\u_2)+\alpha(\u_1-\u_2)+\nabla(p_1-p_2)&=\boldsymbol{F}_k, \ \ \ \text{in} \ \Omega \times (0,T), \\ \nabla \cdot (\u_1-\u_2)&=0, \ \ \ \ \ \text{in} \ \Omega \times (0,T), \\
					\u_1-\u_2&=\boldsymbol{0}, \ \ \ \ \ \text{on} \ \partial\Omega \times [0,T), \\  \u_1-\u_2&=\u_{01}-\u_{02}, \ \  \text{in} \ \Omega \times \{0\},
				\end{aligned}
				\right.
			\end{equation}
			in $\V'$,	where 
			\begin{align}\label{F_k}
				\boldsymbol{F}_k=(f_1-f_2)\g-(\u_1 \cdot \nabla)\u_1+(\u_2\cdot \nabla)\u_2-\beta\big(|\u_1|^{r-1}\u_1-|\u_2|^{r-1}\u_2\big).
			\end{align}
			Taking the inner product with $(\u_1-\u_2)(\cdot)$ to the first equation in \eqref{E7}, we obtain
			\begin{align}\label{E8}
				&\frac{1}{2}\frac{\d}{\d t}\|(\u_1-\u_2)(t)\|_{\H}^2+\mu \|(\u_1-\u_2)(t)\|_{\V}^2+\alpha\|(\u_1-\u_2)(t)\|_{\H}^2\nonumber\\&=((f_1-f_2)(t)\g(t),(\u_1-\u_2)(t))-\langle((\u_1-\u_2)(t) \cdot \nabla)\u_2(t),(\u_1-\u_2)(t) \rangle\nonumber\\&\quad-\beta \langle|\u_1(t)|^{r-1}\u_1(t)-|\u_2(t)|^{r-1}\u_2(t),(\u_1-\u_2)(t) \rangle,
			\end{align} 
		for a.e. $t\in[0,T]$. 	It is worth emphasizing that, for $r\geq 1$ (see Sec. 2.4, \cite{MTM4,MTM6})
			\begin{align}\label{E9}
				&\beta\left(|\u_1|^{r-1}\u_1-|\u_2|^{r-1}\u_2,\u_1-\u_2\right)\nonumber\\&\geq \frac{\beta}{2}\big\||\u_1|^{\frac{r-1}{2}}(\u_1-\u_2)\big\|_{\H}^2+\frac{\beta}{2}\big\||\u_2|^{\frac{r-1}{2}}(\u_1-\u_2)\big\|_{\H}^2.
			\end{align}
			It can be easily seen that
			\begin{align*}
				\|\u_1-\u_2\|_{\widetilde{\L}^{r+1}}^{r+1}&=\int_{\Omega}|\u_1(x)-\u_2(x)|^{r-1} |\u_1(x)-\u_2(x)|^{2} \d x \nonumber\\&\leq 2^{r-2} \int_{\Omega}\left(|\u_1(x)|^{r-1}+|\u_2(x)|^{r-1}\right) |\u_1(x)-\u_2(x)|^{2} \d x
				\nonumber\\& \leq  2^{r-2}\left( \big\||\u_1|^{\frac{r-1}{2}}(\u_1-\u_2)\big\|_{\H}^2+\big\||\u_2|^{\frac{r-1}{2}}(\u_1-\u_2)\big\|_{\H}^2\right).
			\end{align*}
			From the above inequality, we have
			\begin{align}\label{E10}
				\frac{2^{2-r}\beta}{2}\|\u_1-\u_2\|_{\widetilde{\L}^{r+1}}^{r+1}\leq \frac{\beta}{2}\big\||\u_1|^{\frac{r-1}{2}}(\u_1-\u_2)\big\|_{\H}^2+\frac{\beta}{2}\big\||\u_2|^{\frac{r-1}{2}}(\u_1-\u_2)\big\|_{\H}^2.
			\end{align}
			Thus, from \eqref{E9} and \eqref{E10}, one can easily deduce that
			\begin{align}\label{E11}
				&\beta\left(|\u_1|^{r-1}\u_1-|\u_2|^{r-1}\u_2,\u_1-\u_2\right) \geq \frac{\beta}{2^{r-1}}\|\u_1-\u_2\|_{\widetilde{\L}^{r+1}}^{r+1}.
			\end{align}
			Using the Cauchy-Schwarz and Young's inequalities, we get
			\begin{align}\label{E12}
				\left((f_1-f_2)\g,\u_1-\u_2\right)\leq \frac{1}{2\alpha}|f_1-f_2|^2\|\g\|_{\L^2}^2+\frac{\alpha}{2}\|\u_1-\u_2\|_{\H}^2.
			\end{align}
			Making use of H\"older’s, Ladyzhenskaya's, and Young’s inequalities, we have
			\begin{align}\label{E13}
				\langle((\u_1-\u_2) \cdot \nabla)\u_2,\u_1-\u_2\rangle&\leq \|\u_1-\u_2\|_{\widetilde{\L}^4}^2\|\u_2\|_{\V}\nonumber\\& \leq \sqrt{2} \|\u_1-\u_2\|_{\H}\|\u_1-\u_2\|_{\V}\|\u_2\|_{\V} \nonumber\\&\leq \frac{\mu}{2}\|\u_1-\u_2\|_{\V}^2+\frac{1}{\mu}\|\u_1-\u_2\|_{\H}^2\|\u_2\|_{\V}^2.
			\end{align}
			Substituting \eqref{E11}-\eqref{E13} in \eqref{E8}, we obtain
			\begin{align*}
				&\frac{1}{2}\frac{\d}{\d t}\|(\u_1-\u_2)(t)\|_{\H}^2+\frac{\mu}{2} \|(\u_1-\u_2)(t)\|_{\V}^2\no\\&\quad+\frac{\alpha}{2} \|(\u_1-\u_2)(t)\|_{\H}^2+\frac{\beta}{2^{r-1}}\|(\u_1-\u_2)(t)\|_{\widetilde{\L}^{r+1}}^{r+1} \\&\leq \frac{1}{2\alpha}|(f_1-f_2)(t)|^2\|\g(t)\|_{\L^2}^2 +\frac{1}{\mu} \|(\u_1-\u_2)(t)\|_{\H}^2\|\u_2(t)\|_{\V}^2.
			\end{align*} 
			Integrating the above inequality from $0$ to $t$, we deduce 
			\begin{align}\label{E14}
				&\|(\u_1-\u_2)(t)\|_{\H}^2+\mu \int_0^t \|(\u_1-\u_2)(s)\|_{\V}^2\d s\nonumber\\&\quad+\alpha\int_0^t \|(\u_1-\u_2)(s)\|_{\H}^2\d s +\frac{\beta}{2^{r-2}}\int_0^t\|(\u_1-\u_2)(s)\|_{\widetilde{\L}^{r+1}}^{r+1}\d s  \nonumber\\&\leq \|\u_{01}-\u_{02}\|_{\H}^2+ \frac{1}{\alpha}\int_0^t|(f_1-f_2)(t)|^2\|\g(s)\|_{\L^2}^2 \d s\nonumber\\&\quad+\frac{2}{\mu} \int_0^t\|(\u_1-\u_2)(s)\|_{\H}^2 \|\u_2(s)\|_{\V}^2\d s,
			\end{align}
			for all $t \in [0,T]$. Applying Gronwall's inequality in \eqref{E14} and then taking supremum over $0$ to $T$ on both sides, we finally have \eqref{E6}. One can easily deduce \eqref{E61} by taking $\u_{01}=\u_{02}$ in \eqref{E6} and it  completes the proof. 
		\end{proof}
		\end{appendix}
%	\medskip\noindent	{\bf Data availability statement:}	No new data were created or analysed in this study.
	
	\medskip\noindent
	{\bf Acknowledgments:} P. Kumar and M. T. Mohan would  like to thank the Department of Science and Technology (DST), India for Innovation in Science Pursuit for Inspired Research (INSPIRE) Faculty Award (IFA17-MA110). 
	
\end{document}